\documentclass[12pt]{article}

\usepackage[table]{xcolor}
\usepackage{amsmath, amsthm}
\usepackage{amsfonts,amssymb}
\usepackage{hyperref}
\usepackage{authblk}
\usepackage[all]{xy}
\usepackage{enumitem}
\usepackage{stmaryrd}
\usepackage[makeroom]{cancel}
\usepackage{bbm}
\usepackage{fancyvrb}

\usepackage{xstring} 

\usepackage{tikz}
\usetikzlibrary{intersections}

\newcommand{\N}{\mathbb{N}}
\newcommand{\Z}{\mathbb{Z}}
\newcommand{\Q}{\mathbb{Q}}
\newcommand{\R}{\mathbb{R}}
\newcommand{\C}{\mathbb{C}}

\newcommand{\F}{\mathbb{F}}
\newcommand{\g}{\mathfrak{g}}

\renewcommand{\l}{\mathfrak{l}}
\newcommand{\p}{\mathfrak{p}}
\renewcommand{\P}{\mathfrak{P}}
\newcommand{\PP}{\mathbb{P}}

\newcommand{\Gal}{\operatorname{Gal}}

\newcommand{\GL}{\operatorname{GL}}
\newcommand{\SL}{\operatorname{SL}}
\newcommand{\PGL}{\operatorname{PGL}}
\newcommand{\PSL}{\operatorname{PSL}}
\newcommand{\GLFl}{\GL_2(\F_\ell)}

\newcommand{\PGLFl}{\PGL_2(\F_\ell)}

\newcommand{\Fl}{{\F_\ell}}
\newcommand{\Flx}{{\F_\ell^*}}
\newcommand{\Pl}[1]{\PP^1 ( #1 )}
\newcommand{\Ker}{\operatorname{Ker}}
\renewcommand{\Im}{\operatorname{Im}}
\newcommand{\codim}{\operatorname{codim}}

\newcommand{\disc}{\operatorname{disc}}

\newcommand{\Aut}{\operatorname{Aut}}

\newcommand{\lcm}{\operatorname{lcm}}

\newcommand{\charf}{\mathbbm{1}}
\newcommand{\Frob}{\operatorname{Frob}}

\newcommand{\smat}[4]{\left[ \begin{smallmatrix} #1 & #2 \\ #3 & #4 \end{smallmatrix} \right]}
\newcommand{\mat}[4]{\left[ \begin{matrix} #1 & #2 \\ #3 & #4 \end{matrix} \right]}
\newcommand{\eps}{\varepsilon}
\newcommand{\vphi}{\varphi}
\newcommand{\mact}{\big\vert}
\newcommand{\Ei}{\mathcal{E}}
\renewcommand{\S}{\mathcal{S}}
\newcommand{\M}{\mathcal{M}}
\newcommand{\newf}{\mathcal{N}}
\newcommand{\W}{\mathcal{W}}
\newcommand{\Set}[2]{\left\{ #1 \ \Big\vert \ #2  \right\}}
\newcommand{\cond}{\operatorname{cond}}
\newcommand{\sign}{\operatorname{sign}}

\newcommand{\mfurl}[1]{\StrSubstitute{#1}{.}{/}[\mfslash]}
\newcommand{\mfref}[1]{\mfurl{#1}\href{http://www.lmfdb.org/ModularForm/GL2/Q/holomorphic/\mfslash}{#1}}

\numberwithin{equation}{subsection}

\newtheorem{thm}[equation]{Theorem}
\newtheorem{lem}[equation]{Lemma}
\newtheorem{pro}[equation]{Proposition}

\theoremstyle{definition}
\newtheorem{de}[equation]{Definition}
\newtheorem{rk}[equation]{Remark}

\newtheorem{ex}[equation]{Example}

\makeatletter
\newcommand{\subjclass}[2][2010]{%
  \let\@oldtitle\@title%
  \gdef\@title{\@oldtitle\footnotetext{#1 \emph{Mathematics subject classification:} #2}}%
}
\newcommand{\keywords}[1]{%
  \let\@@oldtitle\@title%
  \gdef\@title{\@@oldtitle\footnotetext{\emph{Key words and phrases.} #1.}}%
}
\let\c@table\c@equation
\let\c@figure\c@equation
\makeatother
\makeatother

\topmargin -2.5cm
\textheight 25cm
\textwidth 15cm

\title{Companion forms and explicit computation of $\PGL_2$ number fields with very little ramification}
\subjclass{
11F80, 
11R21, 
11Y40, 
11F11, 
11F30. 
}
\author{Nicolas Mascot\thanks{\href{mailto:n.a.v.mascot@warwick.ac.uk}{n.a.v.mascot@warwick.ac.uk}}}
\affil{\scriptsize{University of Warwick, Coventry CV4 7AL, UK.}}

\VerbatimFootnotes

\begin{document}

\maketitle

\begin{abstract}
In previous works \cite{algo} and \cite{certif}, we described algorithms to compute the number field cut out by the mod $\ell$ representation attached to a modular form of level $N=1$. In this article, we explain how these algorithms can be generalised to forms of higher level $N$.

As an application, we compute the Galois representations attached to a few forms which are supersingular or admit a companion mod $\ell$ with $\ell=13$ (and soon $\ell=41$), and we obtain previously unknown number fields of degree $\ell+1$ whose Galois closure has Galois group $\PGLFl$  and a root discriminant that is so small that it beats records for such number fields.

Finally, we give a formula to predict the discriminant of the fields obtained by this method, and we use it to find other interesting examples, which are unfortunately out of our computational reach.
\end{abstract}

\renewcommand{\abstractname}{Acknowledgements}
\begin{abstract}
The author thanks David Roberts for his suggestion to compute Galois representations attached to forms admitting a companion mod $\ell$, and for providing the author with a few examples of such forms that were especially amenable to computation, and Ariel Pacetti and Aurel Page for stimulating discussions about the arithmetic of the fields cut out by mod $\ell$ Galois representations. 

This research was supported by the EPSRC Programme Grant EP/K034383/1 ``LMF: L-Functions and Modular Forms''. 

The computations presented in this paper were carried out on the Warwick mathematics institute computer cluster provided by the aforementioned EPSRC grant. The computer algebra packages used were \cite{Sage}, \cite{gp} and \cite{Magma}, and we were able to evaluate root discriminants of Galois closures of wildly ramified fields thanks to John Jones's and David Roberts's page \cite{GRD}.
\end{abstract}

\newpage

\section{Introduction}

Let $f = \sum_{n \geqslant 1} a_n q^n$ be a classical cusp form of level $N \in \N$, weight $k \geqslant 2$ and nebentypus $\eps$. We suppose that $f$ is an \emph{eigenform}, that is to say that $f$ is an eigenvector of the Hecke operator $T_n$ for all $n \in \N$, and that it is \emph{new}, which means that there is no eigenform of level less than $N$ which has the same eigenvalues as $f$. In this case, the coefficient $a_1$ of $f$ is necessarily nonzero, so we may normalise $f$ so that $a_1 = 1$. We call such a normalised new eigenform a \emph{newform}. For all $n \in \N$, the eigenvalue of $f$ with respect to the Hecke operator $T_n$ is the coefficient $a_n$ of $f$. These coefficients are algebraic integers and span a number field (that is to say an extension of $\Q$ of \emph{finite} degree) called the \emph{Hecke field} of $f$. This field also contains the values of $\eps$.

Furthermore, for all finite primes $\l$ of this field, there exists a unique \emph{mod $\l$ Galois representation} of degree $2$, that is to say a continuous group homomorphism
\[ \rho_{f,\l} : \Gal(\overline \Q / \Q) \longrightarrow \GL_2(\F_\l), \]
which sends any Frobenius element at $p$ to a matrix of characteristic polynomial
\[ T^2 - a_p T + p^{k-1} \eps(p) \in \F_\l[T] \]
for all primes $p \nmid \ell N$. Here, $\F_\l$ is the residue field of $\l$, $\ell \in \N$ is the prime below $\l$, and the coefficients of the characteristic polynomial are considered mod $\l$.

As $\rho_{f,\l}$ is continuous, Galois theory attaches to the kernel of $\rho_{f,\l}$ a number field $L$, which we call the field \emph{cut out} by $\rho_{f,\l}$. It is a Galois number field, whose Galois group is isomorphic via $\rho_{f,\l}$ to the image of $\rho_{f,\l}$, as shown in the diagram below.
\[ \xymatrix{
\Gal(\overline \Q / \Q) \ar[r]^{\rho_{f,\l}} \ar@{->>}[d] & \GL_2(\F_\l) \\
\Gal(L/\Q) \ar@{^(->}[ur]
} \]
 
The ramification properties of $L$ are well-understood in terms of $f$ and $\l$. In particular, $L$ is unramified at $p$ if $p \nmid \ell N$, and $L$ is at most tamely ramified at every $p \neq \ell$ such that $p \parallel N$.

At $\ell$, the field $L$ is usually wildly ramified, but not always. More precisely, it is tamely ramified when $f$ admits a \emph{companion form} mod $\ell$ in the sense of $\cite{Gross}$, or when $f$ is \emph{supersingular} at $\l$. The first case means that there exists another eigenform $g = \sum_{n \geqslant 1} b_n q^n$, of the same level as $f$ but of weight $\ell+1-k$, such that
\[ \sum_{n \geqslant 1} n a_n q^n \bmod \l = \sum_{n \geqslant 1} n^k b_n q^n \bmod \l' \]
for some prime $\l'$ above $\ell$ in the number field generated by the coefficients $b_n$. The second case means that the $\ell$-th Fourier coefficient $a_\ell$ of $f$ is $0 \bmod \l$; in this case, there also exists another eigenform $g = \sum_{n \geqslant 1} b_n q^n$, of the same level as $f$ but of weight $\ell+3-k$ this time, such that
\[ \sum_{n \geqslant 1} n^2 a_n q^n \bmod \l = \sum_{n \geqslant 1} n^k b_n q^n \bmod \l'. \]

Therefore, Galois representations attached to such forms are a valuable source of number fields of Galois group $\GLFl$ or $\PGLFl$ whose ramification is extremely restricted, and that thus deserve a particular place in tables of number fields, provided of course that we are able to find them explicitly. This was pointed out to the author by David Roberts. A quantitative statement of this fact is achieved by theorem \ref{thm:predict_disc}.

In previous works \cite{algo} and \cite{certif}, we described algorithms to compute explicitly the number field cut out by the mod $\l$ representation attached to a form of level $N=1$. In this article, we show how these algorithms can be generalised to forms of higher level $N$, provided for simplicity that $\ell N$ is squarefree. We lose no generality by assuming that $\ell$ and $N$ are coprime since every mod $\ell$ representation attached to a form of level $\ell N$ is also attached to a form of level $N$; besides, the hypothesis that $N$ is squarefree could probably be suppressed without great difficulty.

As an application, we compute the Galois representations attached to a few forms which admit a companion or are supersingular $\bmod~\ell$, and we obtain very lightly ramified number fields of degree $\ell+1$ that were, as far as the author knows, previously unknown, and whose Galois closure has Galois group $\PGLFl$ and a particularly small root discriminant, thus beating the record for such number fields.

As the output of this algorithm is not certified for the same reasons as in \cite{algo}, we explain how our certification method \cite{certif} can be extended to the case of forms of higher level.

Finally, we establish formulas to predict the discriminant of the number fields cut out by such representations and we use these formulas to single out several interesting examples, even though we are unable to compute these fields explicitly at the moment.

\bigskip

This article is organised as follows. First, in section \ref{sect:WQ}, we derive formulas describing the action of the Atkin-Lehner operators on a space of modular forms of given level and weight, including on the old subspace and on Eisenstein series. These formulas are needed for the generalisation of our modular Galois representation computation algorithm \cite{algo}, and we present this generalisation in section \ref{sect:algo}. In section \ref{sect:certif}, we explain how the output of this new algorithm may be certified thanks to a generalisation of the methods presented in \cite{certif}. Finally, in section \ref{sect:results} we present the results of our computations, and we explain in section \ref{sect:what_to_expect} what results can be obtained with our techniques in general.

\subsection*{Notation}

We let $G_\Q$ denote the absolute Galois group $\Gal(\overline \Q / \Q)$ of the rationals, and we write $\Frob_p \in G_\Q$ for a Frobenius element at the prime $p \in \N$. Since all the Galois representations considered in this article are mod $\ell$ (as opposed to $\ell$-adic), we will denote them by $\rho$ (as opposed to $\overline \rho$). We will also frequently consider projective mod~$\ell$ Galois representations, which we will denote by the letter $\pi$. This should hopefully not cause any confusion, as we will not consider any automorphic representation in this article.

We write $e(x)$ as a shorthand for $e^{2i \pi x}$.

Let $\eps$ be a Dirichlet character modulo $N \in \N$. We write
\[ \g(\eps) = \sum_{x \bmod N} \eps(x) e(x/N) \]
for the Gauss sum of $\eps$, which is thus $-1$ when $N$ is prime and $\eps$ is the trivial character mod $N$. Given a factorisation $N = Q_1 \cdots Q_r$ of $N$ into pairwise coprime factors $Q_i$, we will denote by $\eps_{Q_1} \cdots \eps_{Q_r}$ the corresponding decomposition of $\eps$ into characters of respective moduli $Q_i$. Also, if $N'$ is a multiple of $N$, we will denote by $\eps^{(N')}$ the character obtained from $\eps$ by raising the modulus to $N'$.

We normalise the weight $k$ action on functions on the upper half-plane as
\[ f \mact_k \smat{a}{b}{c}{d} (\tau) = \frac{(ad-bc)^{k/2}}{(c \tau+d)^k} f\left( \frac{a \tau + b}{c \tau + d}\right). \]
This is an action of $\PGL_2^+(\R)$.

For $k,N \in \N$ and $\eps$ a Dirichlet character mod $N$, we let $\M_k(N,\eps)$ \big(resp. $\S_k(N,\eps)$, $\Ei_k(N,\eps)$\big) be the $\C$-vector space of modular forms (resp. cuspforms, Eisenstein series) of level $N$ and nebentypus $\eps$, and we let $\newf_k(N,\eps)$ be the finite set of newforms in $\S_k(N,\eps)$.

When $N \mid N'$, we write $I_N^{N'}$ for the ``identity'' injection map from $\M_k(N,\eps)$ to $\M_k(N',\eps^{(N')})$.

Also, for $t \in \N$, we let $B_t$ denote the operator $f(\tau) \mapsto f(t \tau)$, in other words
\[ f \mact B_t = t^{-k/2} f \mact_k \smat{t}{}{}{1} \]
for $f$ of weight $k$. This operator sends $\S_k(N, \eps)$ to $\S_k(tN, \eps^{(tN)})$, and $\Ei_k(N, \eps)$ to $\Ei_k(tN, \eps^{(tN)})$. Note that $B_t B_{t'} = B_{t t'}$ for all $t,t' \in \N$.

Finally, we write $\langle d \rangle$ for the Hecke operator that acts as multiplication by $\eps(d)$ on $\M_k(N,\eps)$. In particular, $\langle d \rangle$ is the $0$ operator if $\gcd(d,N) > 1$.

\section{Explicit formulas for Atkin-Lehner operators acting on the whole space of modular forms}\label{sect:WQ}

In order to compute mod $\ell$ Galois representations attached to eigenforms of level $N > 1$, we will need to be able to compute the action of the Atkin-Lehner operators on the space $\M_k\big( \Gamma_1(\ell N) \big)$, including its old part and its Eisenstein part. The purpose of this section is to establish explicit formulas for this.

\subsection{Basic facts about the Atkin-Lehner operators}

To begin with, lets us recall without proof the definition and some well-known basic facts about the Atkin-Lehner operators. Proofs may be found in \cite{Asai} and \cite{AL78} for instance.

Fix an integer $N \in \N$, and let $Q \in \N$ be a divisor of $N$. We say that $Q$ is an \emph{exact divisor} of $N$, and we write $Q \parallel N$, if $\gcd(Q,N/Q)=1$.

Let $Q$ be an exact divisor of $N$, let $R = N/Q$, let $x$ and $y$ be two integers, and consider the set of matrices
\[ \W_{Q,x,y}^{(N)} = \Set{w = \smat{Qa}{b}{Nc}{Qd}}{a,b,c,d \in \Z,  a \equiv x \bmod R, b \equiv y \bmod Q, \det w = Q}. \]
We write $\W_{Q}^{(N)}$ for $\W_{Q,1,1}^{(N)}$, and we drop the superscript $(N)$ from the notation whenever $N$ is clear from context. Note that the condition $Q \parallel N$ ensures that $\W_{Q,x,y}^{(N)}$ is never empty as long as $x$ is coprime to $R$ and $y$ is coprime to $Q$.

Let $f \in \M_k(N,\eps)$. Then $f \mact_k w_Q$ does not depend on the choice of $w_Q \in \W_Q^{(N)}$, so we may write the corresponding operator as $W_Q$. More generally, for $w_Q \in \W_{Q,x,y}^{(N)}$ we have
\begin{equation} f \mact_k w_{Q} = \bar \eps_{R}(x) \bar \eps_Q(y) f \mact W_Q. \label{W_xy} \end{equation}
The operator $W_Q$ sends $\M_k(N,\eps)$ to $\M_k(N,\bar \eps_Q \eps_{R})$, $\S_k(N,\eps)$ to $\S_k(N,\bar \eps_Q \eps_{R})$, and $\Ei_k(N,\eps)$ to $\Ei_k(N,\bar \eps_Q \eps_{R})$.

The operators $W_Q$ for varying $Q \parallel N$ do not commute; however, if $Q$ and $Q'$ are two exact divisors of $N$ which are coprime, then we have the relation
\begin{equation} f \mact W_{Q Q'} = \eps_{Q'}(Q) f \mact W_Q \mact W_{Q'} \label{W_QQ'} \end{equation}
for all $f \in \M_k(N, \eps)$.

%

\subsection{Atkin-Lehner operators on the new part of the cuspidal subspace}

The explicit action of the operators $W_Q$ on newforms is well-understood. Indeed, we have the following formula (cf. \cite[section 2]{AL78} and \cite[theorem 2]{Asai}):

\begin{thm}
Let $f = q + \sum_{n \geqslant 2} a_n q^n \in \newf_k(N,\eps)$ be a newform, and let $Q \parallel N$. For all nonzero integers $n \in \N$, write $n_Q = \gcd(n,Q^\infty)$ for the part of $n$ that is supported by the primes dividing $Q$. Then there exists an algebraic number $\lambda_{f,Q} \in \C^*$ of absolute value $1$ such that
\[ f \mact W_Q = \lambda_{f,Q} \sum_{n \geqslant 1} b_n q^n, \]
where $b_n = \eps_{N/Q}(n_Q) \overline \eps_Q(n/n_Q) \overline{a_{n_Q} } a_{n/n_Q}$ for all $n \in \N$. In particular,
\[ f \mact W_N = \lambda_{f,N} \sum_{n \geqslant 1} \overline{a_n} q^n. \]
Let $Q = \prod_{i=1}^r q_i^{e_i}$ be the complete factorisation of $Q$, and write $Q_i = q_i^{e_i}$. Then the following conditions are equivalent:
\begin{itemize}
\item $a_Q \neq 0$,
\item $a_{Q_i} \neq 0$ for all $i$
\item $a_{q_i} \neq 0$ for all $i$,
\end{itemize}
and if these equivalent conditions are satisfied, then $\lambda_{f,Q}$ is given by
\[ \lambda_{f,Q} = \prod_{i=1}^r \eps_{Q_i}(Q/Q_i) \lambda_{f,Q_i}', \]
where
\[ \lambda_{f,Q_i}' = Q_i^{k/2-1}  \g(\eps_{Q_i}) / a_{Q_i} \]
and $\g(\eps_{Q_i})$ denotes the Gauss sum of $\eps_{Q_i}$.
\end{thm}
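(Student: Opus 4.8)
The plan is to deduce the result from two inputs: the composition law \eqref{W_QQ'} for $W_{QQ'}$, which reduces everything to the case where $Q=q^e$ is a prime power, together with a direct manipulation of $q$-expansions in that case. For the reduction, write $Q = \prod_{i=1}^r Q_i$ with $Q_i = q_i^{e_i}$; the $Q_i$ are pairwise coprime exact divisors of $N$, so iterating \eqref{W_QQ'} expresses $f \mact W_Q$ as an explicit product of roots of unity times $f \mact W_{Q_1} \mact \cdots \mact W_{Q_r}$, and keeping track of how the nebentypus of $f \mact W_{Q_1} \mact \cdots \mact W_{Q_{i-1}}$ rescales its $q_i$-th Fourier coefficient turns the accumulated prefactor into exactly $\prod_i \eps_{Q_i}(Q/Q_i)$. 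Meanwhile the $a_n$ are multiplicative and satisfy $a_{q^{m+1}} = a_q a_{q^m}$ at primes $q \mid N$, so $a_{Q_i} = a_{q_i}^{e_i}$ and $a_Q = \prod_i a_{Q_i}$; this gives at once the three claimed equivalences and also shows that the predicted formula for $b_n$ is multiplicative over the $q_i$. Hence it suffices to prove the statement for $Q = q^e \parallel N$, with $R = N/Q$: one must show $f \mact W_Q = \lambda_{f,Q} \sum_{n \geqslant 1} b_n q^n$ with $b_n = \eps_R(n_Q) \overline\eps_Q(n/n_Q) \overline{a_{n_Q}} a_{n/n_Q}$, that $|\lambda_{f,Q}| = 1$, and that $\lambda_{f,Q} = q^{k/2-1} \g(\eps_q)/a_q$ when $a_q \neq 0$.

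\emph{The shape of $f \mact W_Q$.} Fix $Q = q^e \parallel N$ and set $g = f \mact W_Q \in \S_k(N, \overline\eps_Q \eps_R)$. As $W_Q$ is invertible and compatible with the degeneracy maps — so it carries old forms to old forms and therefore preserves the new subspace — $g$ is a scalar multiple of a newform. A short computation with the matrices defining the operators shows that $W_Q$ commutes with $T_p$ for $p \nmid N$ and with $U_p$ for $p \mid R$ up to the factor $\eps_Q(p)$, and that at $q$ it replaces the $U_q$-eigenvalue by its complex conjugate twisted by $\eps_R(q)$. Reading the eigenvalues of $g$ off these relations yields, prime by prime, precisely the value of $b_p$ (respectively of the $q$-Fourier coefficients of $g$) predicted by the formula; multiplicativity then gives all the $b_n$, and since $b_1 = 1$ the scalar $\lambda_{f,Q}$ is simply the leading Fourier coefficient of $g$, so $\lambda_{f,Q}^{-1} g$ is the corresponding normalised newform. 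Taking $Q = N$ here recovers the stated expansion of $f \mact W_N$.

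\emph{Evaluating $\lambda_{f,Q}$.} The relation $|\lambda_{f,Q}| = 1$ follows from the fact that $W_Q^2$ is a scalar operator of modulus one on $\M_k(N,\eps)$ (track the nebentypus and one Fourier coefficient); alternatively, once the formula below is established, it follows from Deligne's bound $|a_q| = q^{(k-1)/2}$ — respectively $q^{k/2-1}$ when $\eps_q$ is trivial, in which case $\g(\eps_q) = -1$ — together with $|\g(\eps_q)| = q^{1/2}$. For the explicit value it suffices, since $q^2 \mid N$ forces $a_q = 0$, to treat $e = 1$: pick a representative $w_q = \smat{q\alpha}{\beta}{N\gamma}{q\delta} \in \W_q^{(N)}$, factor it as an upper-triangular matrix times $\smat{q}{}{}{1}$ times an element of $\Gamma_0(N)$, and expand $f \mact_k w_q$. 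Because $f$ has conductor exactly $q$ at the prime $q$, its $q$-expansion transforms under $\tau \mapsto \tau + 1/q$ through the character $\eps_q$, so the sum over residues $x \bmod q$ appearing in the expansion collapses to the Gauss sum $\g(\eps_q) = \sum_x \eps_q(x) e(x/q)$; comparing leading coefficients then gives $\lambda_{f,q} = q^{k/2-1} \g(\eps_q)/a_q$. I expect this last computation — the one that manufactures the Gauss sum from the $q$-expansion, using the precise behaviour of a newform of exact conductor $q$ under fractional translation — to be the real obstacle; everything preceding it is bookkeeping with \eqref{W_QQ'}, multiplicativity of the $a_n$, and the standard commutation relations between $W_Q$ and the Hecke operators. (One could instead read $\lambda_{f,Q}$ off the functional equation of $L(f,s)$ as a product of local $\varepsilon$-factors, but their classical evaluation amounts to essentially the same Gauss-sum computation.)
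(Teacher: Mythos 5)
The paper does not prove this theorem: it is presented as a known result with citations to Asai and Atkin--Li, and no proof appears in the paper, so there is nothing in the text for your sketch to be compared against.

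Evaluated on its own terms, your outline follows the Atkin--Li architecture correctly but leaves the two load-bearing steps unproved, and the specific matrix factorisation you propose cannot exist. The claim that $W_Q$ ``replaces the $U_q$-eigenvalue by its complex conjugate twisted by $\eps_R(q)$'' is precisely the substantive content of Atkin--Li's theorem, not a short matrix computation: the standard route runs either through the adjointness of $U_q$ under the Petersson pairing, or through the functional equation of the twisted $L$-series, and in either case this is comparable in effort to the Gauss-sum evaluation you then defer to. As for that evaluation, the proposed factorisation of $w_q = \smat{qa}{b}{Nc}{qd} \in \W_q^{(N)}$ as $U \cdot \smat{q}{}{}{1} \cdot M$ with $U$ integral upper-triangular and $M \in \Gamma_0(N)$ fails: the defining congruence forces $b \equiv 1 \bmod q$, hence $q \nmid b$, and the determinant condition $qad - Rbc = 1$ (where $R = N/q$) forces $q \nmid c$; any such $M$ would have $(1,2)$-entry $(b - u_2 q d)/q$, which is never an integer. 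The factorisation that \emph{does} exist is $w_q = \gamma\,\smat{q}{}{}{1}$ with $\gamma = \smat{a}{b}{Rc}{qd} \in \SL_2(\Z)$, but this $\gamma$ lies in $\Gamma_0(R)$ and not in $\Gamma_0(N)$, so what one actually has to compute is the expansion of $f$ at the cusp $\gamma \cdot \infty = a/(Rc)$; that is precisely where the Gauss sum enters, and it is the step your sketch asserts rather than carries out. The outer bookkeeping --- reduction to prime-power $Q$ via \eqref{W_QQ'}, multiplicativity of the $a_n$ and the resulting chain of equivalences, and the deduction $\lvert\lambda_{f,Q}\rvert = 1$ from the eigenvalue bounds --- is sound.
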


\begin{rk}
Note that according to \cite[theorem 3]{Li}, $a_{q_i}=0$ if and only if $e_i \geqslant 2$ and $\eps_{Q_i}$ is not a primitive character. Unfortunately, the author is not aware of any method to compute $\lambda_{f,Q}$ without summing infinite series numerically when this case occurs.
\end{rk}

\subsection{Atkin-Lehner operators on the new part of the Eisenstein subspace}

Fix an integer $k \geqslant 1$, and let $\psi$ and $\vphi$ be \emph{primitive} characters of respective moduli $u$ and $v$, and let $M = uv$. If $k=2$, also suppose that $M>1$. Let $z \in \C$ be such that $\operatorname{Re} z > 2-k$, and define
\[ G_{k,z}^{\psi,\vphi}(\tau) = \sum_{r \bmod u} \sum_{s \bmod v} \sum_{t \bmod u} \sum_{\substack{(c,d) \in \Z^2 \\ c \equiv rv \bmod M \\ d \equiv s+tv \bmod M}}  \frac{\psi(r) \bar \vphi(s)}{(c \tau+d)^k \vert c \tau + d\vert^z} \]
for $\tau$ in the upper half-plane.

This continues analytically to all $z \in \C$, so we may define
\[ G_{k}^{\psi,\vphi}(\tau) = \lim_{z \rightarrow 0} G_{k,z}^{\psi,\vphi}(\tau). \]
Then $G_{k}^{\psi,\vphi} \in \Ei_k(M,\psi \vphi)$ is an Eisenstein series, whose $q$-expansion is
\[ G_{k}^{\psi,\vphi} = \left(\frac{-2 \pi i}{v}\right)^k \frac{g(\bar \vphi)}{(k-1)!} E_{k}^{\psi,\vphi}, \]
\[ E_{k}^{\psi,\vphi} = - \charf_{u=1} \frac{B_{k,\vphi}}k + 2 \sum_{n=1}^{+\infty} \sum_{m \mid n} \psi(n/m) \vphi(m) m^{k-1} q^n, \]
where $B_{k,\vphi}$ is the $k$-th Bernoulli number attached to $\vphi$, which is defined by the identity
\[ \sum_{n=0}^{+\infty} B_{n,\vphi} \frac{T^n}{n!} = \sum_{s=0}^{v-1} \vphi(s) \frac{T e^{sT}}{e^{vT}-1}. \]
The series $G_k^{\psi,\vphi}$ is also an eigenform for the whole Hecke algebra.

\bigskip

If $k=2$, also define
\[ E_2(\tau) = 1 - 24 \sum_{n=1}^{+\infty} \sigma_1(n) q^n, \]
which is NOT a modular form, and
\[ E^{(M)}(\tau) = E_2(\tau) - M E_2(M\tau) \in \Ei_2\big(\Gamma_0(M)\big), \]
which is a modular form, and also an eigenform for the Hecke operators $T_p$ such that $p \nmid M$.

\pagebreak

It is well-known (cf. for instance \cite[theorems 4.5.2 and 4.6.2]{DS}) that for $k = 1$ or $k \geqslant 3$, the series $E_{k}^{\psi,\vphi} \mact B_t$ for $(\psi,\vphi,t)$ such that

\begin{itemize}
\item $\psi$ and $\vphi$ are primitive,
\item the product $t \cdot \cond(\psi) \cdot \cond(\vphi)$ divides $N$,
\item $\psi(-1)\vphi(-1) = (-1)^k$,
\item and $\psi^{(N)} \vphi^{(N)} = \eps$
\end{itemize}
form a basis of the Eisenstein space $\Ei_k(N,\eps)$. Similarly, for $k=2$ the series $E_{2}^{\psi,\vphi} \mact B_t$ for $(\psi,\vphi,t)$ such that
\begin{itemize}
\item $\psi$ and $\vphi$ are primitive and not both trivial,
\item the product $t \cdot \cond(\psi) \cdot \cond(\vphi)$ divides $N$,
\item $\psi(-1)\vphi(-1) = 1$,
\item and $\psi^{(N)} \vphi^{(N)} = \eps$,
\end{itemize}
and in addition the series $E^{(t)}$ for $1 < t \mid N$ if $\eps$ is trivial, form a basis of $\Ei_2(N,\eps)$.  In this way, the series $E_{k}^{\psi,\vphi}$ with $\cond(\psi) \cond(\vphi)=N$ and $\psi(-1)\vphi(-1) = (-1)^k$, as well as $E^{(N)}$ if $k=2$, play   in $\Ei_k(N,\eps)$ a r\^ole that is analogous to the newforms in $\S_k(N,\eps)$. As a consequence, we make the following \emph{ad hoc} definition:

\begin{de}
Let $\eps$ be a Dirichlet character mod $N$. The \emph{new part} of  the Eisenstein space $\Ei_k(N,\eps)$ is the linear span of the $E_{k}^{\psi,\vphi}$ for $(\psi,\vphi)$ ranging over the set of couples of primitive Dirichlet characters such that $\cond(\psi) \cond(\vphi)=N$ and that $\psi^{(N)} \vphi^{(N)} = \eps$, as well as of $E^{(N)}$ if $k=2$ and $\eps$ is the trivial character mod $N$.
\end{de}

It follows from the above that this new subspace is also a Hecke submodule, unless $k=2$ and $\eps$ is trivial, in which case it is still a submodule over the algebra spanned by the $T_p$ for $p \nmid N$.

Continuing the parallel with the case of the cuspidal subspace, we are now going to derive formulas describing the action of the Atkin-Lehner operators $W_Q$ on this new subspace. 

\begin{thm}
Let $\psi$ and $\vphi$ be \emph{primitive} characters of respective moduli $u$ and $v$, let $N = uv$, and let $Q \parallel N$. Write $R = N/Q$, $u = u_Q u_R$ and $v = v_Q v_R$, where $u_Q = \gcd(u,Q)$, $u_R=\gcd(u,R)$, and similarly for $v_Q$ and $v_R$. Finally, let $\chi = \psi \vphi$, a character modulo $N$. Then
\[ G_k^{\psi,\vphi} \mact W_Q = \frac{(u_Q/v_Q)^{k/2} \vphi_Q(-1)}{\chi_Q(v_R)  \chi_R(v_Q)} G_k^{\bar \vphi_Q \psi_R, \bar \psi_Q \vphi_R}. \]
\end{thm}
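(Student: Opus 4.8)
The plan is to compute $G_k^{\psi,\vphi} \mact W_Q$ directly from the defining series, by acting on the summation variables $(c,d) \in \Z^2$ with a well-chosen matrix $w_Q \in \W_Q^{(N)}$ and tracking how the congruence conditions and the characters $\psi(r), \bar\vphi(s)$ transform. First I would pick a convenient representative $w_Q = \smat{Qa}{b}{Nc}{Qd}$ with $\det w_Q = Q$; the freedom in choosing $a,b,c,d$ subject to $a \equiv 1 \bmod R$, $b \equiv 1 \bmod Q$ will later be exploited (or, equivalently, I would work with a general $w_Q \in \W_{Q,x,y}^{(N)}$ and use \eqref{W_xy} at the end to reduce to $x=y=1$). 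Substituting $(c\tau+d) \mapsto$ the appropriate linear form under $\mact_k w_Q$, the series becomes a new double/quadruple sum over lattice vectors satisfying transformed congruences mod $N$; because $Q \parallel N$, by CRT everything splits as a product of a condition mod $Q$ and a condition mod $R$, and the matrix $w_Q$ acts as (essentially) a swap-and-scale on the $Q$-part while acting trivially on the $R$-part. This is what produces the characters $\bar\vphi_Q$ and $\psi_R$ in the first slot and $\bar\psi_Q$ and $\vphi_R$ in the second slot of the output $G_k^{\bar\vphi_Q\psi_R,\,\bar\psi_Q\vphi_R}$.

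The key steps, in order, are: (1) write out $G_{k,z}^{\psi,\vphi} \mact_k w_Q$ and change variables in the inner lattice sum so as to recover a sum of the same shape; (2) use $Q \parallel N$ and CRT to factor the indexing sets over $r,s,t \bmod u,v,u$ and over $(c,d)$ into their $Q$-parts and $R$-parts, and match the $Q$-part against the "$\bar\vphi_Q$ in slot 1 / $\bar\psi_Q$ in slot 2" pattern and the $R$-part against the "$\psi_R$ / $\vphi_R$" pattern; (3) collect the scalar: the $(ad-bc)^{k/2} = Q^{k/2}$ from the weight action combines with the rescaling of the $Q$-part of the lattice (a factor involving $u_Q$ and $v_Q$) to give the $(u_Q/v_Q)^{k/2}$, while the reindexing of the character values over residues mod $Q$ versus mod $R$ produces the Gauss-sum-free ratio $\vphi_Q(-1) / \big(\chi_Q(v_R)\chi_R(v_Q)\big)$, with $\vphi_Q(-1)$ coming from the sign introduced when flipping $(c,d) \mapsto (-c,-d)$ to normalise the new congruence, and the $\chi = \psi\vphi$ factors coming from rewriting $\psi, \vphi$ evaluated at shifted arguments; (4) take $z \to 0$ (the analytic continuation is uniform, so the limit passes through the finite manipulation) and, if I worked with a non-normalised $w_Q$, apply \eqref{W_xy} to finish. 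It is worth checking the formula against the known newform case $W_N$ (where $Q = N$, $R = 1$, so $u_R = v_R = 1$, $\chi_R$ trivial, and the output should be $G_k^{\bar\vphi,\bar\psi}$ up to $(u/v)^{k/2}\vphi(-1)/\chi(1) $-type scalars), and against the relation \eqref{W_QQ'} for a factorisation $Q = Q_1 Q_2$, as consistency checks.

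The main obstacle I expect is bookkeeping step (3): correctly tracking the scalar. Each reindexing of a character sum over residues mod $Q$ (or mod $R$) against the evaluation of $\psi$ or $\vphi$ at an argument that has been multiplied by $v_R$, $v_Q$, or $-1$ contributes a small factor, and these must be assembled without sign or inverse errors into exactly $(u_Q/v_Q)^{k/2} \vphi_Q(-1) \big/ \big(\chi_Q(v_R)\chi_R(v_Q)\big)$. In particular one must be careful that $\psi$ and $\vphi$ are \emph{primitive} of moduli $u$ and $v$ — so that $\psi_Q, \psi_R$ (the components of $\psi$ with respect to $u = u_Q u_R$) are themselves primitive and the decomposition $\chi_Q = \psi_Q \vphi_Q$, $\chi_R = \psi_R \vphi_R$ behaves well — and that the conductor identity $\cond(\bar\vphi_Q \psi_R) \cond(\bar\psi_Q \vphi_R) = N$ holds, so that the right-hand side is a genuine element of the new Eisenstein subspace in the sense of the preceding definition. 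The lattice-rescaling factor in step (1)–(2), where the $Q$-part of the congruence on $(c,d)$ gets stretched by $u_Q$ in one coordinate and compressed by $v_Q$ in the other, is the source of the $(u_Q/v_Q)^{k/2}$ and is the one place where the weight normalisation $(ad-bc)^{k/2}/(c\tau+d)^k$ matters; everything else is CRT and character algebra.
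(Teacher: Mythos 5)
Your overall strategy — substitute $w_Q$ into the defining series, reindex the lattice, split by CRT at $Q\parallel N$, and handle $k\leqslant 2$ by the $z$-regularised series and analytic continuation — is the one the paper uses, and your sanity checks (specialising to $W_N$, testing against \eqref{W_QQ'}) are good instincts. But there is a step you are missing that does essentially all the work, and without it the plan as written would become a bookkeeping nightmare.

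The paper's proof does \emph{not} substitute $w_Q$ into the quadruple sum over $r,s,t \bmod u,v,u$ and $(c,d)$ satisfying congruences mod $N$. Instead, the first move (for $k\geqslant 3$, where $z=0$) is to collapse that quadruple sum to the clean double Eisenstein series
\[
G_k^{\psi,\vphi}(\tau) \;=\; {\sum_{(m,n)\in\Z^2}}' \,\frac{\psi(m)\,\bar\vphi(n)}{(m v \tau + n)^k},
\]
in which the characters sit directly on the lattice indices $(m,n)$ rather than being smuggled in through congruence classes. This form is what makes the reindexing manageable: after acting by $w_Q = \smat{Qa}{b}{Nc}{Qd}$, one factors $Q=u_Qv_Q$, $R=u_Rv_R$ in the denominator and observes that $\smat{v_Qa}{v_Rb}{u_Rc}{u_Qd}\in\SL_2(\Z)$ — a determinant-$1$ change of variables on $(m,n)$, not the determinant-$Q$ transformation you envisage. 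The absolute convergence of the double series for $k\geqslant 3$ justifies this reindexing, after which the character factors $\psi(m u_Q d - n u_R c)\bar\vphi(-mv_Rb+nv_Qa)$ split by CRT into $Q$-parts and $R$-parts, and the constants in front are evaluated using $a\equiv1\bmod R$, $b\equiv1\bmod Q$, $Qad-Rbc=1$. Working directly from the four-fold sum over $r,s,t,(c,d)$, as your plan (1)–(2) proposes, would force you to track how a determinant-$Q$ affine substitution interacts with congruence conditions modulo $N$ on $(c,d)$, which is precisely the mess the double-sum reduction avoids.

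Two smaller inaccuracies in your scalar bookkeeping: the factor $\vphi_Q(-1)$ does not arise from flipping $(c,d)\mapsto(-c,-d)$; it comes out of $\bar\vphi_Q(-m v_R b)=\bar\vphi_Q(-1)\,\bar\vphi_Q(v_R)\,\bar\vphi_Q(m)$ after the $\SL_2(\Z)$ reindexing, and $\bar\vphi_Q(-1)=\vphi_Q(-1)$ since this is $\pm1$. And the $(u_Q/v_Q)^{k/2}$ is not a "lattice rescaling" factor — it is $Q^{k/2}/v_Q^k = (u_Qv_Q)^{k/2}/v_Q^k$ produced by pulling $v_Q$ out of the denominator after the reindexing; the determinant of the change of variables itself is $1$.

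With the double-sum form in hand, the rest of your plan — CRT splitting, the $\chi_Q(v_R)\chi_R(v_Q)$ denominator as a product of $\bar\psi_Q(v_R)\bar\vphi_Q(v_R)$ and $\bar\psi_R(v_Q)\bar\vphi_R(v_Q)$, and the $z\to0$ limit for $k=1,2$ via analyticity in $z$ — matches the paper's argument. The gap is the missing first reduction, which is the real content of the proof; as stated, your plan (2) would be substantially harder to carry out than you anticipate.
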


\begin{proof}
Suppose first that $k \geqslant 3$, so that we may set $z=0$ in the formula defining $G_{k,z}^{\psi,\vphi}$ without having to invoke analytic continuation. The following formula is easily derived:

\[ G_{k}^{\psi,\vphi}(\tau) = \sum_{(m,n) \in \Z^2}' \frac{\psi(m) \bar\vphi(n)}{(m v \tau + n)^k}. \]

Here and in what follows, the dash sign on the double sum means that the term of index $m=n=0$ is omitted.

\bigskip

Let now $\smat{Qa}{b}{Nc}{Qd} \in \W_Q$, and compute that
\begin{align*}
(G_k^{\psi,\vphi} \mact W_Q)(\tau) =& Q^{k/2} \sum_{(m,n) \in \Z^2}' \frac{\psi(m) \bar\vphi(n)}{\big(m v (Qa \tau +b) + n (Nc \tau + Qd)\big)^k} \\
=& Q^{k/2} \sum_{(m,n) \in \Z^2}' \frac{\psi(m) \bar\vphi(n)}{\big((mvQa+nNc) \tau + (mvb+nQd)\big)^k} \\
=& Q^{k/2} \sum_{(m,n) \in \Z^2}' \frac{\psi(m) \bar\vphi(n)}{\big((m v_Q a+n u_R c) v_R Q \tau + (m v_R b+n u_Q d) v_Q \big)^k}
\end{align*}
\\
since $Q = u_Q v_Q$ and $R = u_R v_R$. Observe now that $\smat{v_q a}{v_R b}{u_R c}{u_Q d} \in \SL_2(\Z)$. As the double series is absolutely convergent, we may thus reindex it to get
\begin{align*}
(G_k^{\psi,\vphi} \mact W_Q)(\tau) =& Q^{k/2} \sum_{(m,n) \in \Z^2}' \frac{\psi(m u_Q d - n u_R c) \bar\vphi(-m v_R b + n v_Q a)}{\big(m v_R Q \tau + n v_Q \big)^k} \\
=& \frac{Q^{k/2}}{v_Q^k} \sum_{(m,n) \in \Z^2}' \frac{\psi(m u_Q d - n u_R c) \bar\vphi(-m v_R b + n v_Q a)}{\big(m v_R u_Q \tau + n\big)^k} \\
=& \left(\frac{u_Q}{v_Q}\right)^{k/2} \sum_{(m,n) \in \Z^2}' \frac{\psi_Q( - n u_R c) \psi_R(m u_Q d) \bar\vphi_Q(-m v_R b) \bar\vphi_R(n v_Q a)}{\big(m v_R u_Q \tau + n\big)^k} \\
=& \left(\frac{u_Q}{v_Q}\right)^{k/2} \psi_Q( -u_R c) \psi_R(u_Q d) \bar\vphi_Q(-v_R b) \bar\vphi_R(v_Q a) \sum_{(m,n) \in \Z^2}' \frac{\bar\vphi_Q \psi_R(m) \psi_Q \bar\vphi_R(n)}{\big(m v_R u_Q \tau + n\big)^k} \\
=& \left(\frac{u_Q}{v_Q}\right)^{k/2} \bar\psi_Q(v_R) \bar\psi_R(v_Q) \bar\vphi_Q(-v_R) \bar\vphi_R(v_Q) G_k^{\bar \vphi_Q \psi_R, \bar \psi_Q \vphi_R}(\tau),
\end{align*}
as the relations $a \equiv 1 \bmod R$, $b \equiv 1 \bmod Q$ and $Qad-Rbc=1$ from the definition of $\W_Q$ imply that $u_Q v_Q d \equiv 1 \bmod R$ and that $-u_R v_R c \equiv 1 \bmod Q$.

This concludes the proof when $k \geqslant 3$. For $k=1$ or $2$, by replacing the denominators appropriately (e.g. $(c\tau+d)^k$ becomes $(c\tau+d)^k \vert c\tau+d\vert^z$) throughout the computations, one obtains an identity between $G_{k,z}^{\psi,\vphi} \mact W_Q$ and $G_{k,z}^{\bar \vphi_Q \psi_R, \bar \psi_Q \vphi_R}$ which is valid for $\operatorname{Re} z + k > 2$. As both sides are analytic in $z$, this identity extends to all $z \in \C$, so we may let $z$ tend to $0$ to get the result.
\end{proof}

The case of the series $E^{(N)}$ is much simpler:

\begin{thm}
Let $Q \parallel N$, and let $R=N/Q$. Then $E^{(N)} \mact W_Q = E^{(R)} - E^{(Q)}$. \newline In particular, $E^{(N)} \mact W_N = - E^{(N)}$.
\end{thm}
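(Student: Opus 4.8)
The plan is to bypass the quasi‑modularity of $E_2$ by working with its non‑holomorphic completion $E_2^*(\tau) = E_2(\tau) - \frac{3}{\pi \Im \tau}$, which \emph{is} modular of weight $2$ for the full group $\SL_2(\Z)$, so that $E_2^* \mact_2 \gamma = E_2^*$ for all $\gamma \in \SL_2(\Z)$; moreover $E_2^*$, like any weight‑$k$ object, is fixed by scalar matrices, since the weight‑$k$ action factors through $\PGL_2^+(\R)$. Writing $V_M = \smat{M}{}{}{1}$ and noting that $E_2^* \mact_2 V_M(\tau) = M E_2^*(M\tau) = M E_2(M\tau) - \frac{3}{\pi \Im \tau}$, the non‑holomorphic terms cancel in $E_2^* - E_2^* \mact_2 V_M$, so that for every $M \in \N$ one has the clean identity $E^{(M)} = E_2^* \mact_2(I - V_M)$ (which incidentally re‑proves that $E^{(M)}$ is holomorphic). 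In particular $E^{(1)} = 0$, which will supply the ``in particular'' clause as the case $Q = N$ of the main identity.

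Next I would choose a convenient matrix representing $W_Q$. Because $E^{(N)}$ has trivial nebentypus, formula \eqref{W_xy} shows that $W_Q$ is realised by \emph{any} integer matrix of the shape $\smat{Qa}{b}{Nc}{Qd}$ of determinant $Q$; using $\gcd(Q,R)=1$, pick $\mu, \nu \in \Z$ with $Q\mu - R\nu = 1$ and set $w_Q = \smat{Q\mu}{1}{N\nu}{Q}$, whose determinant is $Q^2 \mu - N\nu = Q(Q\mu - R\nu) = Q$. The crux is then two one‑line matrix computations: $w_Q V_Q^{-1} = \smat{\mu}{1}{R\nu}{Q}$ and $(V_N w_Q)(Q V_R)^{-1} = \smat{Q\mu}{R}{\nu}{1}$, both of which lie in $\SL_2(\Z)$ because their determinant is $Q\mu - R\nu = 1$. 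Combined with the $\SL_2(\Z)$‑invariance and scalar‑invariance of $E_2^*$, and with $Q V_R = \smat{N}{}{}{Q} = (QI) V_R$, these identities give $E_2^* \mact_2 w_Q = E_2^* \mact_2 V_Q$ and $E_2^* \mact_2(V_N w_Q) = E_2^* \mact_2 (Q V_R) = E_2^* \mact_2 V_R$.

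Assembling the pieces, $E^{(N)} \mact W_Q = E^{(N)} \mact_2 w_Q = E_2^* \mact_2 w_Q - E_2^* \mact_2(V_N w_Q)$ by the first paragraph and the right‑action property of the slash, and this equals $E_2^* \mact_2 V_Q - E_2^* \mact_2 V_R = E^{(R)} - E^{(Q)}$, where the last step is the first‑paragraph identity applied to $M = Q$ and $M = R$ (the two copies of $E_2^*$ cancel). Specialising to $Q = N$, $R = 1$ and using $E^{(1)} = 0$ yields $E^{(N)} \mact W_N = -E^{(N)}$. The only part that is not purely mechanical is the opening move — recognising that the non‑holomorphic completion $E_2^*$ turns the quasi‑modular correction terms of $E_2$ into an $\SL_2(\Z)$‑invariant quantity, so that they cancel of their own accord rather than having to be tracked by hand — together with the Bézout choice of $w_Q$ that makes the two coset identities hold on the nose; everything after that is a routine verification. (Alternatively, the Fricke case $W_N$ alone admits a direct two‑line proof from $E_2(-1/\tau) = \tau^2 E_2(\tau) + \frac{6\tau}{\pi i}$, but the argument above treats all $Q$ uniformly and is no longer.)
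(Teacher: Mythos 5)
Your proof is correct, and it takes a genuinely different route from the paper's. The paper first proves the Fricke case $E^{(N)} \mact W_N = -E^{(N)}$ directly from the transformation law $\tau^{-2} E_2(-1/\tau) = E_2(\tau) + \frac{6}{\pi i \tau}$, then handles general $Q$ by writing $E^{(N)} = E^{(Q)} + Q E^{(R)}(Q\cdot)$ and applying $W_Q$ to each piece, which requires invoking Theorem \ref{thm:Wold} (the commutation of $W$ and $B$) as a forward reference. Your argument instead introduces the non-holomorphic completion $E_2^*(\tau) = E_2(\tau) - \frac{3}{\pi \Im \tau}$ at the outset; the observation $E^{(M)} = E_2^* \mact_2 (I - V_M)$ converts the quasi-modular correction into a genuinely $\SL_2(\Z)$- and scalar-invariant object, after which the identity reduces to two small coset computations (I checked that with $w_Q = \smat{Q\mu}{1}{N\nu}{Q}$ one has $w_Q V_Q^{-1} = \smat{\mu}{1}{R\nu}{Q}$ and $(V_N w_Q)(Q V_R)^{-1} = \smat{Q\mu}{R}{\nu}{1}$, both of determinant $Q\mu - R\nu = 1$). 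What your approach buys is uniformity and self-containedness: the Fricke case comes out as the specialization $R = 1$ rather than being proved separately, and no forward reference to the $W$-$B$ commutation theorem is needed. What the paper's approach buys is that it avoids non-holomorphic objects entirely and reuses machinery (Theorem \ref{thm:Wold}) that is established anyway for the oldform computations. Both are valid; yours is arguably tidier as a standalone proof of this particular theorem.
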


\begin{proof}
Let us begin with the case $Q=N$. According to \cite[formula (1.5) page 19]{DS}, we have
\[ \frac1{\tau^2} E_2(-1/\tau) = E_2(\tau) + \frac{6}{\pi i \tau}, \]
so
\begin{align*}
\left( E^{(N)} \mact W_N \right) (\tau) &= \frac{N}{(N\tau)^2} \big( E_2(-1/N\tau) - N E_2(-N/N\tau) \big) \\
&= \frac{N}{(N\tau)^2} E_2(-1/N\tau) - \frac1{\tau^2} E_2(-1/\tau) \\
&= N \left(E_2(N\tau) + \frac{6}{\pi i N\tau} \right) - \big(E_2(\tau) + \frac{6}{\pi i \tau}\big) \\
&= -E^{(N)}(\tau).
\end{align*}

Then, in the case of general $Q \parallel N$, notice that
\[ E^{(N)}(\tau) = E_2(\tau) - Q E_2(Q \tau) + Q E_2(Q \tau) - N E_2(N \tau) = E^{(Q)}(\tau) + Q E^{(R)}(Q \tau), \]
so that
\[ E^{(N)} \mact W_Q = E^{(Q)} \mact W_Q + Q E^{(R)} \mact B_Q \mact W_Q = - E^{(Q)} + \frac{Q}{Q} E^{(R)} \]
by theorem \ref{thm:Wold} below and the inclusion $\W_Q^{(N)} \subset \W_Q^{(Q)}$ of matrices sets.
\end{proof}

\subsection{Atkin-Lehner operators on the old subspace}

We now derive formulas for the action of $W_Q$ on the old subspace of $\M_k(N,\eps)$. This involves computing the action of $W_Q$ (as an operator of level $N$) on forms which are new of level $M \mid N$ on the one hand, and deriving commutation relations between $W_Q$ and $B_t$ on the other hand.

In order to make the notation precise, we will specify the level at which $W_Q$ acts whenever necessary, by writing $W_Q^{(M)}$ for the operator $W_Q$ acting on a space of forms of level $M$. Recall that $I_M^N$ denotes the identity injection map from $\M_k(M,\eps)$ to $\M_k(N',\eps^{(N)})$ whenever $M \mid N$. Thus for instance one of our tasks is to compare $f \mact I_M^N \mact W_Q^{(N)}$ with $f \mact W_Q^{(M)} \mact I_M^N$ for $f$ of level $M$. This part could hence be seen as the study of the commutation relations between the $W_Q$ operators and other operators such as $B_t$ or $I_M^N$.

\bigskip

\begin{thm}\label{thm:Wold}
Let $M \in \N$, let $\eps$ be a Dirichlet character modulo $M$, and let \linebreak $f \in \M_k(M,\eps)$. Let $N$ be a multiple of $M$, and let $t$ divide $N/M$, so that $N=tMR$ for some integer $R \in \N$ and that $f \mact B_t$ may be seen as a form of level $N$.

Let $Q \parallel N$, and define $M_Q=\gcd(M,Q)$, $t_Q = \gcd(t,Q)$, $R_Q=\gcd(R,Q)$, so that $Q = t_Q M_Q R_Q$, and then let $M'=M/M_Q$ and $t'=t/t_Q$. Then the form $f \mact B_t \mact I_{tM}^N \mact W_Q^{(N)}$ depends on $Q$ but not on $N$ (as long as $N$ is such that $tM \mid N$ and that $Q \parallel N$ of course), so we may write it as $f \mact B_t \mact W_Q$. Explicitly, we have
\[ f \mact B_t \mact W_Q = (R_Q/t_q)^{k/2} \overline \eps_{M'}(t_Q) \overline \eps_{M_Q}(t') f \mact W_{M_q}^{(M)} \mact B_{t' R_Q}. \] 
\end{thm}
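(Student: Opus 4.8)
The plan is to recast the whole statement as a single identity between $2\times 2$ integer matrices, multiplied by a scalar matrix and a matrix in $\Gamma_0(M)$, and then to read off the actions of $W_{M_Q}^{(M)}$ and of $B_{t'R_Q}$ from it; apart from the fact that $f$ has nebentypus $\eps$, the argument is pure matrix algebra.

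First I would unwind every operator into the slash notation. Since $Q\parallel N$ — which also forces $v_p(M)=v_p(M_Q)$ for each prime $p\mid Q$, hence $M_Q\parallel M$ — both coset sets $\W_Q^{(N)}$ and $\W_{M_Q}^{(M)}$ are nonempty; pick representatives $w_Q=\smat{Qa}{b}{Nc}{Qd}$ and $w_{M_Q}^{(M)}=\smat{M_Q\alpha}{\beta}{M\gamma_0}{M_Q\delta}$. Then
\begin{align*}
f\mact B_t\mact I_{tM}^N\mact W_Q^{(N)} &= t^{-k/2}\,f\mact_k\bigl(\smat{t}{0}{0}{1}\,w_Q\bigr),\\
f\mact W_{M_Q}^{(M)}\mact B_{t'R_Q} &= (t'R_Q)^{-k/2}\,f\mact_k\bigl(w_{M_Q}^{(M)}\,\smat{t'R_Q}{0}{0}{1}\bigr).
\end{align*}
Put $R'=R/R_Q$, $w'=t_Q\,w_{M_Q}^{(M)}\smat{t'R_Q}{0}{0}{1}$ and $\gamma=\smat{t}{0}{0}{1}w_Q\,(w')^{-1}$, so that tautologically $\smat{t}{0}{0}{1}w_Q=\gamma\cdot t_Q I\cdot w_{M_Q}^{(M)}\smat{t'R_Q}{0}{0}{1}$. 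Since the scalar matrix $t_Q I$ acts trivially under $\mact_k$, the theorem will follow once I establish that $\gamma\in\Gamma_0(M)$ and that $f\mact_k\gamma=\overline\eps_{M'}(t_Q)\overline\eps_{M_Q}(t')f$: indeed $\det\gamma=1$ (using $\det w_Q=Q$, $\det w_{M_Q}^{(M)}=M_Q$ and $Q=t_QM_QR_Q$), so all the weight-$k/2$ determinant factors combine into $t^{-k/2}(t'R_Q)^{k/2}=(R_Q/t_Q)^{k/2}$, and the independence of the result from $N$ becomes manifest on the formula itself.

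It remains to compute $\gamma$. I would use throughout the factorizations $Q=t_QM_QR_Q$, $N=tMR$, $t=t_Qt'$, $M=M_QM'$, together with the observation — forced by $Q\parallel N$ — that $t'$, $M'$ and $R'$ are each coprime to $Q$ and that $N/Q=t'M'R'$. Clearing denominators one checks that all four entries of $\gamma$ are integers; in particular the lower-left entry works out to $M\bigl(R'c\delta-R_Q\gamma_0 d\bigr)$, so $\gamma\in\Gamma_0(M)$ and hence $f\mact_k\gamma=\eps(d_\gamma)f$, where $d_\gamma=-M'R'c\beta+M_QR_Q\alpha d$ is the lower-right entry (this is just the nebentypus relation for $f\in\M_k(M,\eps)$). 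I would then pin down $d_\gamma$ in the two coprime moduli $M_Q$ and $M'=M/M_Q$ separately: reducing the relation $Qad-(N/Q)bc=1$ modulo $M_Q$ and using $b\equiv1$ yields $-t'M'R'c\equiv1\pmod{M_Q}$, whence $d_\gamma\equiv (t')^{-1}\pmod{M_Q}$ since $\beta\equiv1\pmod{M_Q}$; reducing it modulo $M'$ and using $a\equiv1$ yields $t_QM_QR_Qd\equiv1\pmod{M'}$, whence $d_\gamma\equiv t_Q^{-1}\pmod{M'}$ since $\alpha\equiv1\pmod{M'}$. Combining these, $\eps(d_\gamma)=\eps_{M_Q}\bigl((t')^{-1}\bigr)\,\eps_{M'}\bigl(t_Q^{-1}\bigr)=\overline\eps_{M_Q}(t')\,\overline\eps_{M'}(t_Q)$, as required.

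The only genuine difficulty I anticipate is bookkeeping: one must keep three simultaneous prime-by-prime factorizations aligned — that of $Q$ along $t$, $M$, $R$; that of $N$; and that of $N/Q=t'M'R'$ — and feed precisely the right congruence and determinant conditions defining $\W_Q^{(N)}$ and $\W_{M_Q}^{(M)}$ into the two reductions of $d_\gamma$. There is no conceptual hurdle: in particular no strong approximation is needed, since a single representative in each Atkin–Lehner coset suffices and $\gamma$ lands in $\Gamma_0(M)$ regardless of the choice, and the half-integral weight factors look after themselves once the matrix identity is in place.
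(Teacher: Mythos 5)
Your proof is correct and is at bottom the same matrix computation as the paper's, just organised in reverse. The paper conjugates $\smat{t}{}{}{1}w_Q$ by $\smat{t'}{}{}{1}$ and $\smat{R_Q}{}{}{1}$ so as to peel off $B_{t'R_Q}$ from the right, absorb the scalar $t_Q$, and land on an \emph{explicit} representative of the twisted coset $\W_{M_Q,t_Q,t'}^{(M)}$, at which point it cites formula \eqref{W_xy} to read off the character factor $\bar\eps_{M'}(t_Q)\bar\eps_{M_Q}(t')$. You instead take an \emph{arbitrary} representative $w_{M_Q}^{(M)}$ of the untwisted coset $\W_{M_Q}^{(M)}$, form the discrepancy $\gamma=\smat{t}{}{}{1}w_Q(w')^{-1}$, verify it lies in $\Gamma_0(M)$, and extract $\eps(d_\gamma)$ by reducing the relation $Qad-(N/Q)bc=1$ modulo the coprime moduli $M_Q$ and $M'$ together with the coset congruences on $w_Q$ and $w_{M_Q}^{(M)}$; that is precisely the computation underlying \eqref{W_xy}, so your argument is a self-contained re-derivation of the special instance you need rather than a citation of it. I have checked your entries of $\gamma$: the $(2,1)$ entry is $M(R'c\delta-R_Q\gamma_0 d)$, the $(2,2)$ entry is $-M'R'\beta c+M_QR_Q\alpha d$, and the top row is $(\,t_QM_Qa\delta-t'M'b\gamma_0,\ -t_Qa\beta+t'b\alpha\,)$, all integers, and the two residues $d_\gamma\equiv(t')^{-1}\pmod{M_Q}$, $d_\gamma\equiv t_Q^{-1}\pmod{M'}$ follow as you describe. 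One small remark worth making explicit: although you define $R_Q$ via $R=N/(tM)$, the condition $Q\parallel N$ forces $R_Q=Q/(t_QM_Q)$, so $R_Q$ (hence $t'R_Q$ and the whole right-hand side) depends only on $t$, $M$ and $Q$; that is the observation that actually justifies the shorthand $f\mact B_t\mact W_Q$ being independent of $N$.
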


\begin{proof}
Let $w_Q = \smat{Qa}{b}{Nc}{Qd} \in \W_{Q}^{(N)}$, so that $a \equiv 1 \bmod N/Q$ and $b \equiv 1 \bmod Q$. Then we have
\begin{align*}
f \mact B_t \mact I_{tM}^N \mact W_Q^{(N)} &= t^{-k/2} f \mact_k \smat{t}{}{}{1} w_Q \\
&= t^{-k/2} f \mact _k\smat{t_Q}{}{}{1} \smat{t'}{}{}{1} w_Q \smat{t'^{-1}}{}{}{1} \smat{t'}{}{}{1} \\
&= t_Q^{-k/2} f \mact_k \smat{t_Q Qa}{t_Q t' b}{\frac{N}{t'}c}{Qd} \mact B_{t'} \\
&= t_Q^{-k/2} f \mact_k \smat{Q a}{t' b}{\frac{N}{t}c}{\frac{Q}{t_Q}d} \mact B_{t'} \\
&= t_Q^{-k/2} f \mact_k \smat{\frac{Q}{R_Q} a}{t' b}{\frac{N}{tR_Q}c}{\frac{Q}{t_Q}d} \smat{R_Q}{}{}{1} \mact B_{t'} \\
&= (R_Q/t_Q)^{k/2} f \mact_k \smat{M_Q t_Q a}{t' b}{M R' c}{M_Q R_Q d} \mact B_{t' R_Q}, \\
\end{align*}
where $R' = R/R_Q \in \N$. As $\smat{M_Q t_Q a}{t' b}{M R' c}{M_Q R_Q d} \in \W_{M_q,t_Q,t'}^{(M)}$, the result follows.
\end{proof}

This formula, along with the ones for newforms and Eisenstein series presented above, allow us to compute the action of $W_Q$ on the whole space $\M_k(N,\eps)$.

\section{Computation of modular Galois representations}\label{sect:algo}


In this section, we fix a prime $\ell \in \N$, a newform
\[ f = q +\sum_{n \geqslant 2} a_n q^n \in \S_k(N,\eps) \]
of weight $2 \leqslant k \leqslant \ell$, where $\eps$ is a Dirichlet character mod $N$, and a prime $\l$ \linebreak above $\ell \in \N$ of the number field generated by the coefficients $a_n$ of $f$. We want to compute the Galois representation
\[ \rho_{f,\l} : G_\Q \longrightarrow \GL_2(\F_\l) \]
attached to $f \bmod \l$.

In this article, we are especially interested in the case where $f \bmod \l$ admits a companion form or is supersingular, but the algorithms that we describe do not require that this is the case.

For simplicity, we will assume that the integer $\ell N$ is squarefree. This is mainly so as to simplify statements pertaining to the $q$-expansion of modular forms at all the cusps or such as proposition \ref{prop:periods} below, and this hypothesis could be suppressed without much difficulty.

Later on, we will focus on the case when the nebentypus $\eps$ of $f$ is trivial. This is only for the exposition's sake, and this hypothesis may also be removed very easily.

\subsection{The modular curve $X_H(\ell N)$}\label{sect:XH}

Just as in \cite{algo}, the idea of our algorithm is to ``catch'' the representation $\rho_{f,\l}$ in the torsion of the jacobian of a modular curve. More precisely, according to \cite[theorem 9.3 part 2]{Gross}, there exists an eigenform $f_2$ of weight $2$ and level $\Gamma_1(\ell N)$, a prime $\l_2 \mid \ell$ of the Hecke field of $f_2$ and an identification of residue fields $\F_\l \simeq \F_{\l_2}$ such that
\[ f \bmod \l = f_2 \bmod \l_2. \]
Let $\eps_2$ be the nebentypus of $f_2$, which is a Dirichlet character modulo $\ell N$. The same reference also tells us that $f_2$ may be chosen so that the $N$-part of $\eps_2$ agrees with $\eps$, in equations
\begin{equation} (\eps_2)_N = \eps. \label{char2_compat} \end{equation}
We will use this relation later; for now, the principal consequence of all this is that $\rho_{f,\l} \sim \rho_{f_2,\l_2}$ is afforded in the $\ell$-torsion of the jacobian of the modular curve $X_1(\ell N)$.

However, the genus of $X_1(M)$ grows quickly\footnote{It is roughly $M^2/24$ by Riemann-Hurwitz.} with $M$, and unfortunately the algorithm \cite{algo} cannot reasonably cope with genera higher than 30. As a result, we are limited to $\ell \leqslant 31$ when $N=1$, and to even smaller values of $\ell$ when $N$ is larger.

Nevertheless, the representation we are interested in is afforded in the abelian variety $A_{f_2}$ corresponding to the Galois orbit of the eigenform $f_2$, which is a factor (up to isogeny) of the jacobian $J_1(\ell N)$ of $X_1(\ell N)$, and it is quite possible that $A_{f_2}$ is much smaller than $J_1(\ell N)$. Unfortunately, we do not know how to compute explicitly with abelian varieties, unless they are provided to us as the jacobian of some curve. We thus want to find a curve whose jacobian contains $A_{f_2}$ and is not much larger than it.

A natural solution, which we owe to \cite{Maarten}, is to introduce the modular curve $X_H(\ell N)$ corresponding to the congruence subgroup
\[ \Gamma_H(\ell N) = \left\{ \smat{a}{b}{c}{d} \in \SL_2(\Z) \ \big\vert \ c \mid \ell N \text{ and } a,d \bmod \ell N \in H \right\}, \]
where $H \leqslant (\Z/ \ell N \Z)^*$ is the kernel of the nebentypus $\eps_2$ of $f_2$. As $\Gamma_H(\ell N)$ is an intermediate congruence subgroup between $\Gamma_1(\ell N)$ and $\Gamma_0(\ell N)$, this modular curve is defined over $\Q$ and is intermediate between $X_0(\ell N)$ and $X_1(\ell N)$. In some cases, its genus is significantly smaller than that of $X_1(\ell N)$, and so we save a lot of computational effort by replacing $X_1(\ell N)$ with it, but in other cases we have $H \leqslant \{ \pm 1 \}$ so that $\Gamma_H(\ell N) = \Gamma_1(\ell N)$ and $X_H(\ell N) = X_1(\ell N)$.

More precisely, note that the respective determinant characters of $\rho_{f,\l}$ and of $\rho_{f_2,\l_2}$ are
\[ \det \rho_{f,\l} : \begin{array}{rcl}
 G_\Q & \longrightarrow & \F_\l^* \\
 \Frob_p & \longmapsto & p^{k-1} \eps(p) \bmod \l \\
 \end{array} \]
 and 
\[ \det \rho_{f_2,\l_2} : \begin{array}{rcl}
 G_\Q & \longrightarrow & \F_{\l_2}^* \\
 \Frob_p & \longmapsto & p^{2-1} \eps_2(p) \bmod \l_2 \\
 \end{array} \]
 for $p \nmid \ell N$ prime, so since these representations agree, the character $\eps_2$ must satisfy 
 \[ \eps_2(x) \bmod \l_2 = x^{k-2} \eps(x) \bmod \l \]
 for all $x \in \Z$ by Dirichlet's theorem on arithmetic progressions.
 
To simplify, \textbf{we suppose from now on that the nebentypus $\eps$ of $f$ is trivial}. By \eqref{char2_compat}, $\eps_2$ is then a Dirichlet character modulo $\ell N$ of conductor $\ell$ or $1$, and it satisfies
\[ \eps_2(x) \equiv x^{k-2} \bmod \l_2 \]
for all $x \in \Z$, so that the subgroup $H \leqslant (\Z/\ell N \Z)^*$ is the pull-back to $(\Z/\ell N \Z)^*$ of the subgroup $K$ of $(\Z/\ell \Z)^*$ formed of the $(k-2)$-torsion elements. As a result, our curve $X_H(\ell N)$ may be seen as the fibred product
\begin{equation} X_H(\ell N) = X_K(\ell) \times_{X(1)} X_0(N); \label{X_H_fibred} \end{equation}
besides, the lower the multiplicative order of $k-2 \bmod \ell-1$, the larger $H$, hence the smaller $X_H(\ell N)$ and the more efficient our computation of $\rho_{f,\l}$ will be. For instance, for $k=2$, we have $X_H(\ell N) = X_0(\ell N)$, which reminds us that $\rho_{f,\l}$ is already afforded in the torsion of the jacobian of $X_0(N)$ and that we have raised the level to $\ell N$ for nothing in this case. The next best case is $k-2 = (\ell-1)/2$, for which $X_H(\ell N)$ is a double cover of $X_0(\ell N)$ and $\eps_2$ is the Legendre symbol at $\ell$ seen as a character mod $\ell N$.
 
\subsection{The periods of $X_H(\ell N)$}

As in \cite{algo}, in order to compute $\rho_{f,\l}$ we begin by computing the periods of the modular curve $X_H(\ell N)$, which will allow us to view its jacobian as an explicit complex torus.

For each  Dirichlet character $\chi$, define the modular symbol
\[ s_\chi = \sum_{a \bmod m} \bar \chi(-a) \{ \infty , a/m \}, \]
where $m$ is the modulus of $\chi$.  This well-defined, as
\[ \int_{x}^{x+1} F = \int_{x}^{\infty} F - \int_{x+1}^\infty F = 0 \]
for all $x \in \R$ and every cuspform $F$ by 1-periodicity of $F$.

Let $(\gamma_j)_{1 \leqslant j \leqslant 2g}$ be a $\Z$-basis of the homology of $X_H(\ell N)$, and let $T$ be a Hecke operator at level $\Gamma_H(\ell N)$.  We express each $\gamma_j$ as a linear combination
\[ \gamma_j = \sum_{i,\chi} \lambda_{i,\chi} T^i s_\chi \]
of modular symbols $T^i s_\chi$ over some appropriate cyclotomic field $K$, where the characters $\chi$ are allowed to have different moduli, but are \emph{all primitive}. We do so by starting with $m=1$, and increasing $m$ until the $T^i s_\chi$ for $\chi$ primitive of modulus at most $m$ span the homology of $X_H(\ell N)$ over $K$. In general, characters $\chi$ of large modulus mean that more $q$-expansion terms are required to compute the integral of a cuspform along $s_\chi$ (cf. proposition \ref{prop:periods} below), so if possible we choose $T$ to be a generator of the Hecke algebra as a $\Q$-algebra, so as to minimise the number of values of $m$ we have to try until we get a generating set.

\bigskip

The space of holomorphic differentials on $X_H(\ell N)$ is
\begin{equation} \S_2\big(\Gamma_H(\ell N)\big) = \bigoplus_{\substack{\eps \bmod \ell N \\ \Ker \eps \geqslant H}} \S_2(\ell N, \eps). \label{dec_S2} \end{equation}
A natural basis of this space is formed of the $f \mact B_t$, where $f \in \newf_2(M,\eps)$ is a newform whose level $M$ divides $\ell N$ and whose nebentypus $\eps$ factors through $H$, and $t$ divides $\ell N/M$. The entries of the corresponding period matrix are of the form
\[ \int_{\gamma_j} f \mact B_t =  \sum_{i,\chi} \lambda_{i,\chi} \int_{s_\chi} f \mact B_t \mact T^i, \]
so to compute these entries we need to know how to compute explicitly the action of $T$ on the stable subspace spanned by the $f \mact B_t$ for fixed $f$ and varying $t$ on the one hand, and the integrals $\int_{s_\chi} f \mact B_t$ on the other hand. This is what we achieve in the following two propositions. 

\begin{pro}
Let $n \in \N$ be any integer, and let $T_n$ be the corresponding Hecke operator at level $\Gamma_H(\ell N)$. Let $f = q + \sum_{m \geqslant 2} a_m q^m \in \newf_k(M,\eps)$ be a newform of weight $k$ whose level $M$ divides $\ell N$ and whose nebentypus $\eps$ factors through $H$, and let $t$ be a divisor of $\ell N / M$. Write $n = n_1 n_2$, where $n_1 = \gcd(n,(\ell N)^\infty)$, and factor $n_1$ as $\prod_i p_i^{e_i}$. Then
\[ f \mact B_t \mact I_{tM}^{\ell N} \mact T_n = a_{n_2} f \mact B_t \mact \prod_i U_{p_i}^{e_i}, \]
where $U_p$ is the operator
\[ \sum_m b_m q^m \longmapsto \sum_m b_{pm} q^m, \]
and furthermore
\[ f \mact B_t \mact U_p = \left\{ \begin{array}{ll} a_p f \mact B_{t} - p^{k-1} \eps(p) f \mact B_{pt} & \text{if } p \nmid t, \\ f \mact B_{t/p} & \text{if } p \mid t  \end{array} \right. \]
for all primes $p \in \N$. Note that in the first case, $\eps(p)=0$ if $p \mid M$, since $\eps$ is a character mod $M$.
\end{pro}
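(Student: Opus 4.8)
The plan is to reduce the computation of $T_n$ to prime powers and then to treat separately the part $n_2$ of $n$ coprime to $\ell N$ and the part $n_1 = \prod_i p_i^{e_i}$ supported on the primes dividing $\ell N$. For the first identity I would use that at level $\Gamma_H(\ell N)$ the Hecke operators $T_m$ are multiplicative in coprime indices, and that for a prime $p \mid \ell N$ the diamond operator $\langle p \rangle$ vanishes, so that $T_{p^e} = U_p^e$; this gives the factorisation $T_n = T_{n_2} \prod_i U_{p_i}^{e_i}$ at level $\Gamma_H(\ell N)$, in which each $U_{p_i}$ is already defined at level $\ell N$. Since $\gcd(n_2, \ell N) = 1$, the operator $T_{n_2}$ commutes with the degeneracy maps relating the levels dividing $\ell N$, in particular with $B_t$ and with $I_{tM}^{\ell N}$ (this is the standard compatibility of Hecke operators prime to the level with degeneracy maps; cf. \cite[section 2]{AL78}). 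As $f$ is a $T_{n_2}$-eigenform of level $M$ with eigenvalue $a_{n_2}$, and $\gcd(n_2, M) = 1$, this gives $f \mact B_t \mact I_{tM}^{\ell N} \mact T_{n_2} = a_{n_2}\, f \mact B_t \mact I_{tM}^{\ell N}$, and applying $\prod_i U_{p_i}^{e_i}$ yields the first displayed identity. (Alternatively, the identity can be checked directly on $q$-expansions at $\infty$, using that $f \mact B_t \mact I_{tM}^{\ell N}$ has nebentypus $\eps^{(\ell N)}$, so that $\langle d \rangle$ acts on it as $\eps(d)$, together with $\gcd(n_2, tM) = 1$ to reindex the divisor sum defining $T_{n_2}$.)

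For the formula for $f \mact B_t \mact U_p$ I would argue entirely on $q$-expansions at $\infty$, which is legitimate because $X_H(\ell N)$ is connected, so that a modular form of level $\Gamma_H(\ell N)$ is determined by its $q$-expansion at $\infty$, and both sides of the claimed identity are such forms. Since $f \mact B_t = \sum_m a_m q^{tm}$, the coefficient of $q^n$ in $f \mact B_t \mact U_p$ is the coefficient of $q^{pn}$ in $f \mact B_t$, namely $a_{pn/t}$ if $t \mid pn$ and $0$ otherwise. If $p \mid t$, write $t = pt'$; then $t \mid pn \Leftrightarrow t' \mid n$ and in that case $pn/t = n/t'$, so the coefficient of $q^n$ is that of $\sum_m a_m q^{t'm} = f \mact B_{t/p}$. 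If $p \nmid t$, then $t \mid pn \Leftrightarrow t \mid n$, so the coefficient is $a_{pn/t}$ when $t \mid n$; by the Hecke recursion $a_p a_m = a_{pm} + p^{k-1} \eps(p) a_{m/p}$ satisfied by the newform $f$ (with $a_{m/p} = 0$ when $p \nmid m$, and $\eps(p) = 0$ when $p \mid M$), this is $a_p a_{n/t} - p^{k-1} \eps(p) a_{n/(pt)}$, which is exactly the coefficient of $q^n$ in $a_p\, f \mact B_t - p^{k-1} \eps(p)\, f \mact B_{pt}$.

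I expect the main obstacle to be bookkeeping rather than genuine mathematics: one must make sure that all the forms in sight really have level dividing $\ell N$, so that the maps $I_M^N$, $B_t$ and $U_p$ are defined — this is where the hypothesis that $\ell N$ is squarefree is used, since it forbids $p \mid t$ and $p \mid M$ simultaneously and forces $ptM \mid \ell N$ whenever $p \nmid M$ — and one must correctly invoke (or, in the $q$-expansion variant, circumvent by the reindexing argument) the commutation of $T_{n_2}$ with the degeneracy maps.
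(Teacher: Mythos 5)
Your proof is correct and follows the same route as the paper's (which merely cites the multiplicativity $T_{n_1}T_{n_2}=T_{n_2}T_{n_1}$ for coprime indices, $U_{p^e}=U_p^e$ for $p\mid\ell N$, and the $q$-expansion formula $F\mact T_p=\sum_m b_{pm}q^m+p^{k-1}\chi(p)\sum_m b_m q^{pm}$, and calls the result immediate). You simply spell out the reindexing and the appeal to the Hecke recursion for the newform $f$ at its own level $M$ — which is exactly the step the paper leaves implicit — and your remarks about where squarefreeness of $\ell N$ enters (keeping $ptM$ a divisor of $\ell N$ when $p\nmid tM$) are accurate.
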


\begin{proof}
Immediate from the formulas
\[ F \mact T_{n_1} T_{n_2} = F \mact T_{n_2} T_{n_1} \quad \text{if } \gcd(n_1,n_2)=1, \]
\[ F \mact U_{p^e} = F \mact U_p^e \quad \text{for } p \mid \ell N \text{ prime and } e \in \N, \]
and
\[ F \mact T_p = \sum_m b_{pm} q^m + p^{k-1} \chi(p) \sum_m b_m q^{pm} \]
valid for all $F = \sum_m b_m q^m \in \S_k(\ell N,\chi)$.
\end{proof}

\begin{pro}\label{prop:periods}
Fix a squarefree integer $N \in \N$, consider a newform
\[ f  = q + \sum_{n \geqslant 2} a_n q^n \in \newf_2(M,\eps) \]
of level $M \mid N$ and nebentypus $\eps$, let $\lambda \in \C^*$ be such that
\[ f \mact W_M = \lambda \sum_{n \geqslant 1} \bar a_n q^n, \]
and let $t \mid \frac{N}{M}$, so that $f \mact B_t$ is a cuspform of level $N$. Then for all \emph{primitive} Dirichlet characters $\chi$ whose modulus $m$ is prime to $N$, we have
\[ \int_{s_\chi} f \mact B_t = \frac{\chi(t)}t \frac{m}{2 \pi i \g(\chi)} \sum_{n \geqslant 1} \frac{\chi(n) a_n - \lambda_\chi \bar \chi(n) \bar a_n}n R^n, \]
where $\g(\chi)$ denotes the Gauss sum of $\chi$, $\lambda_\chi = \chi(-M) \eps(m) \frac{\g(\chi)}{\g(\bar \chi)} \lambda$, and $R = e^{-2 \pi / m \sqrt{M}}$.
\end{pro}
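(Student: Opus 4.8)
The plan is to compute the integral $\int_{s_\chi} f \mact B_t$ directly by unfolding the definition of the modular symbol $s_\chi$ and writing each $\{\infty, a/m\}$ as a path integral, then using the Atkin--Lehner operator $W_M$ to fold the divergent part of the integral near the cusps back into a convergent tail. First I would reduce to the case $t=1$: since $B_t$ scales $\tau \mapsto t\tau$, a change of variables in the integral $\int_{s_\chi} f\mact B_t$ turns it into an integral of $f$ over the image of $s_\chi$ under $\smat{t}{}{}{1}$, and tracking how this interacts with the twisted sum $\sum_a \bar\chi(-a)\{\infty,a/m\}$ produces the prefactor $\chi(t)/t$; so it suffices to establish the formula for $t=1$, which I will do next.

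For $t=1$, I would write $\int_{s_\chi} f = \sum_{a \bmod m} \bar\chi(-a) \int_{\infty}^{a/m} f(\tau)\,d\tau$, and use the standard trick of breaking each path at the point $i/(m\sqrt M)$ on the imaginary axis above the real point $a/m$ --- more precisely, at the point $a/m + i/(m\sqrt M)$, which is the apex where the geodesic from $\infty$ to $a/m$ comes closest to the cusp. On the segment from $\infty$ down to that apex, $f$ decays like $e^{-2\pi y}$ and one gets a convergent series in $R = e^{-2\pi/m\sqrt M}$ directly from the $q$-expansion of $f$, giving the $\chi(n)a_n/n\,R^n$ term after summing the Gauss-sum-weighted geometric-type sums $\sum_a \bar\chi(-a) e(na/m) = \chi(n)\g(\chi)$ (up to the sign conventions built into $\bar\chi(-a)$, which is where the $\g(\chi)$ and the implicit $\chi(-1)$ factors enter). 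On the remaining segment, from the apex down to the cusp $a/m$, $f$ does not decay, so I would apply the Atkin--Lehner involution: the matrix $\smat{Ma}{*}{Mc}{Md} \in \W_M$ (with suitable $*$ and $c,d$) sends the cusp $a/m$ to $\infty$ and pulls $f$ back to $f\mact W_M = \lambda \sum \bar a_n q^n$, converting the non-decaying piece into a second convergent series. Reassembling the twisted sum over $a$ of these transformed integrals is where the modified constant $\lambda_\chi = \chi(-M)\eps(m)\g(\chi)/\g(\bar\chi)\cdot\lambda$ appears: the change of variable permutes the residues $a \bmod m$ by multiplication by (a unit involving) $M$ and $m^{-1}$, so the character sum $\sum_a \bar\chi(-a)(\cdots)$ gets twisted by $\chi$ evaluated on that unit, and the nebentypus $\eps$ of $f$ contributes $\eps(m)$ because the lower-left entry of the Atkin--Lehner matrix is divisible by $m$ but $a,d$ are only determined mod $M$. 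This yields the $-\lambda_\chi \bar\chi(n)\bar a_n/n\,R^n$ term, and collecting the overall normalisation $\frac{m}{2\pi i\,\g(\chi)}$ from the $d\tau$ measure and the Gauss-sum identities finishes the computation.

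The main obstacle I anticipate is bookkeeping the constants precisely --- in particular pinning down the exact form of $\lambda_\chi$, which requires carefully choosing the representative $\smat{Ma}{b}{mc}{Md}$ of $W_M$ acting at level $M$ (note $m$ is prime to $N$, hence prime to $M$, so such a matrix of determinant $M$ exists), computing its action on the apex point and on $\tau$, and then re-indexing the sum $\sum_{a\bmod m}$ after the substitution. The factor $\chi(-M)$ comes from how $\chi$ sees the substituted residue, $\eps(m)$ from the $\mact_k$ action (with $k=2$) of the matrix whose relevant entry is a multiple of $m$, and the ratio $\g(\chi)/\g(\bar\chi)$ from comparing the Gauss sum produced by the untransformed piece with the one produced by the transformed piece. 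A secondary technical point is justifying the interchange of the (absolutely convergent, since $\operatorname{Re} > $ the relevant bound is automatic in weight $2$ for a cuspform) series and the integral, and checking that the apex $i/(m\sqrt M)$ is the correct splitting height so that both resulting series are governed by the same $R$; this follows because $W_M$ conjugates the geodesic through $a/m$ and $\infty$ to itself up to the scaling by $\sqrt M$, so the apex maps to height $1/(m\sqrt M)\cdot\sqrt M \cdot(\text{something})$ --- matching the two expansions. Once these constant computations are done, the rest is a routine termwise summation.
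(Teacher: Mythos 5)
Your plan is correct and, from the reduction to $t=1$ onwards, takes a genuinely different route from the paper. The paper first collapses the character sum by the identity $\sum_{a\bmod m}\bar\chi(-a)f(\tau+a/m)=\frac{m}{\g(\chi)}(f\otimes\chi)(\tau)$, turning the whole quantity into $\frac{m}{\g(\chi)}\int_\infty^0 f\otimes\chi$ where $f\otimes\chi$ is a weight-$2$ newform of level $m^2M$; it then splits that single integral at $i/(m\sqrt M)$ and applies the Fricke involution $W_{m^2M}$, quoting Atkin--Li for the pseudo-eigenvalue $\lambda_\chi$ of the twisted form. You instead split each path $\{\infty,a/m\}$ at $a/m+i/(m\sqrt M)$, apply a representative of $W_M$ at the original level $M$ to the cusp-side piece, and re-sum over $a$. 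This does work when carried through: the matrix $w=\smat{Ma'}{b}{Mm}{-Ma}\in\W_M^{(M)}$ with $-Ma'a-bm=1$ sends $a/m\to\infty$ and maps the apex to $a'/m+i/(m\sqrt M)$ (confirming your apex height), formula \eqref{W_xy} gives $f\mact_2 w^{-1}=\bar\eps(m^{-1})\,f\mact W_M=\eps(m)\lambda\sum\bar a_n q^n$, the re-indexing $a\mapsto a'=-(Ma)^{-1}\bmod m$ turns $\bar\chi(-a)$ into $\chi(M)\chi(a')$, and the two Gauss sums produced by the two halves combine via $\g(\chi)\g(\bar\chi)=\chi(-1)m$ to reproduce exactly the stated $\lambda_\chi$. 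Your route is more self-contained --- it never needs the Atkin--Li twist theorem, only the Fricke pseudo-eigenvalue of $f$ itself, which is given as a hypothesis --- at the price of more explicit bookkeeping; the paper's route is shorter because it outsources the delicate constant to \cite{AL78}. Two small slips in your sketch: the lower-left entry of the relevant Atkin--Lehner matrix must be $Mm$, divisible by $M$ and not merely $m$; and the factor $\eps(m)$ arises from \eqref{W_xy} applied to $b\bmod M$ rather than directly from the lower-left entry, while $\chi(-M)$ only materialises after combining the Gauss sums.
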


\newpage

\begin{proof}
By Fourier analysis, we have
\[ \chi(x) = \frac1m \sum_{a \bmod m} \hat \chi(a) e\left(\frac{ax}m \right), \]
where
\[ \hat \chi(a) =\sum_{x \bmod m} \chi(x) e\left(-\frac{ax}m \right) = \bar \chi(-a) \g(\chi) \]
since $\chi$ is primitive. Therefore,
\begin{align*}
\int_{s_\chi} f \mact B_t =& \sum_{x \bmod m} \bar \chi(-x) \int_{\infty}^{x/m} f(t \tau) d \tau \\
=& \frac1t \sum_{x \bmod m} \bar \chi(-x) \int_{\infty}^{tx/m} f(\tau) d \tau = \frac{\chi(t)}t \sum_{x \bmod m} \bar \chi(-x) \int_{\infty}^{x/m} f(\tau) d \tau
\end{align*}
by the changes of variable $\tau' = t \tau$ and $x' = tx$, which is legitimate since $t$ and $m$ are coprime.

Next, we have
\begin{align*}
\sum_{x \bmod m} \bar \chi(-x) \int_{\infty}^{x/m} f(\tau) d \tau &= \sum_{x \bmod m} \int_{\infty}^{0} \bar \chi(-x) f\left(\tau+\frac{x}{m}\right) d \tau \\
&= \int_{\infty}^0 \sum_{n \geqslant 1} \sum_{x \bmod m} \bar \chi(-x) e(n x/m) a_n e(n \tau) d\tau \\
&= \frac{m}{\g(\chi)} \int_{\infty}^0 \sum_{n \geqslant 1} \chi(n) a_n e(n \tau) d\tau \\
&= \frac{m}{\g(\chi)} \int_{\infty}^0 f \otimes \chi,
\end{align*}
where $f \otimes \chi = q + \sum_{n \geqslant 2} \chi(n) a_n q^n$ is a newform of weight 2, level $M' = m^2 M$ and character $\eps \chi^2$ by \cite[p. 228]{AL78}. Furthermore, according to the same reference, we have
\[ f \otimes \chi \mact W_{M'} = \lambda_\chi \left( q + \sum_{n \geqslant 2} \bar \chi(n)  \bar{a_n} q^n \right). \]
As a result, we have
\begin{align*}
\int_{\infty}^0 f \otimes \chi &= \left( \int_{\infty}^{\frac{i}{\sqrt{M'}}} + \int_{\frac{i}{\sqrt{M'}}}^{0} \right) f \otimes \chi \\
&= \int_{\infty}^{\frac{i}{\sqrt{M'}}} \left( f \otimes \chi - f \otimes \chi \mact W_{M'} \right) \\
&= \frac1{2 \pi i} \sum_{n\geqslant 1} (\chi(n) a_n - \lambda_\chi \bar \chi(n) \bar a_n) \frac{e^{-2\pi n/\sqrt{M'}}}n. 
\end{align*}
\end{proof}

\subsection{High-precision $q$-expansion of the forms of weight 2}

Thanks to the previous two formulas, we are thus able to compute the periods of $X_H(\ell N)$ to very high precision, provided that we first compute enough terms of the $q$-expansion of the cuspforms of weight 2 forming our basis \eqref{dec_S2} of $\S_2\big( \Gamma_H(\ell N) \big)$, which is a non-trivial task since we typically need a few hundred thousands terms. We do so thanks to an improved version of the mod $p$ modular equation method which we described in \cite[section 3.1]{algo}.

Suppose for the simplicity of the exposition that the dimension $g_0$ of $\S_2\big( \Gamma_0(\ell N) \big)$ is at least $2$, and let $f_1, \dots , f_{g_0} \in \S_2\big( \Gamma_0(\ell N) \big)$ be a basis made of forms whose Fourier coefficients are rational, where $g_0$ is thus the genus of $X_0(\ell N)$. Thanks to the Sturm bound, we may easily rescale these forms so that their Fourier coefficients are integers. As in \cite[section 3.1]{algo}, we then begin by computing a polynomial equation relating the functions $f_1 / du$ and $u$ on $X_0(\ell N)$ modulo a large enough prime $p \in \N$, where $u = 1 / j \in q \Z [[ q]]$ is the multiplicative inverse of the $j$-invariant. The degrees of this equation are respectively $d_0$ and at most $d_0+s_0+2g_0-2$, where $d_0$ is the index of $\Gamma_0(\ell N)$ in $\SL_2(\Z)$ and $s_0$ is the number of cusps of $X_0(\ell N)$. We then compute a lot of terms of the $q$-expansion of $u$ mod $p$. This can be done quickly thanks to the formulas
\[ u = \frac{E_4^3-E_6^2}{(12 E_4)^3}, \]
\[ E_4 = 1 + 240 \sum_{n \geqslant 1} \left( \sum_{0 < d \mid n} d^3 \right) q^n, \]
\[ E_6 = 1 - 504 \sum_{n \geqslant 1} \left( \sum_{0 < d \mid n} d^5 \right) q^n, \]
and an Eratosthenes sieve, as long as we use fast series arithmetic and we do all the computations mod $p$. Next, we use Newton iteration in $\F_p[[q]]$ on the bivariate polynomial equation so as to recover the coefficients mod $p$ of $f_1/du$, and hence of $f_1$. Finally, we lift these coefficients back to $\Z$, which we can do unambiguously thanks to Deligne's bounds if $p$ is large enough.

This is rather slow, as the degrees of the polynomial equation tend to be high, so for $f_2$ we proceed a bit differently, by computing a polynomial equation relating $f_2 / f_1$ and $u$ modulo a (possibly different) large prime $p$. We then use Newton iteration to recover the coefficients of $f_2$. This time, the degrees of this equation are $d_0$ and at most $2g_0-2$, which is already much better, so this is much faster than for $f_1$.

Then, for all the other forms $f_i$ in the basis of  $\S_2\big( \Gamma_0(\ell N) \big)$, we compute an equation relating $f_i / f_1$ to $f_2 / f_1$ modulo a (possibly again different) large prime $p$, and we use Newton iteration to deduce the coefficients of $f_i$. This is very fast, as the degree of this equation is at most $2g_0-2$ in each variable; besides, all the forms $f_i$ may be treated in parallel.

Finally, let $\eps$ be a non-trivial character appearing in \eqref{dec_S2}, let $r > 1$ be its order, and fix a basis $F_1, \dots, F_d$ of $\S_2(\ell N, \eps)$ made up of forms whose Fourier coefficients lie in the value field $K$ of $\eps$. Such a basis always exists and may easily be computed thanks to \cite[theorem p. xiii]{CasselsFlynn}. Again, thanks to the Sturm bound, we may effortlessly arrange for the $F_i$ to have integral coefficients. Then, for each $F_i$, we choose a large enough prime $p$ such that $p \equiv 1 \bmod r$, so that $p$ splits completely in $K$, and we compute an equation relating $(F_i/f_1)^r \bmod \p$ to $f_2 / f_1 \bmod p$ for each prime $\p$ of $K$ above $p$. By definition of $r$, the function $(F_i/f_1)^r$ descends to $X_0(\ell N)$, so this equation has degrees at most $2g_0-2$ and $2g-2$, where $g$ is the genus of $X_H(\ell N)$, and so the computation is still reasonably fast. We then use Newton iteration to recover the coefficients of $F_i \bmod \p$, and finally lift these coefficients back to $K$ thanks to Chinese remainders over the primes $\p \mid p$. Note that the various forms $F_i$ and primes $\p$ may easily be processed in parallel.

As in \cite[section 3.1]{algo}, we thus obtain a method to expand a basis of $\S_2\big( \Gamma_0(\ell N) \big)$ to $q$-adic accuracy $O(q^B)$ in time quasilinear in $B$. However, this new method performs much better in practice, since it relies on modular equations of degrees much smaller than in \cite{algo} for all but the first form.

\subsection{The rest of the computation}

Once we have computed a very precise approximation of the periods of $X_H(\ell N)$ over $\C$, we may proceed essentially as in \cite{algo}, by inverting the Abel-Jacobi map at $\ell$-torsion points thanks to Kamal Khuri-Makdisi's algorithms \cite{Kamal}.

In order to adapt these algorithms to $X_H(\ell N)$, we need to compute the Riemann-Roch space
\begin{equation} V_2 \simeq H^0\big(X_H(\ell N), D_0 \big) \label{V2_H0} \end{equation}
attached to a divisor $D_0$ defined over $\Q$ whose degree $d_0$ is at least $2g+1$, where $g$ is the genus of $X_H(\ell N)$. As in \cite{algo}, we let $D_0$ be the sum of a canonical divisor of $X_H(\ell N)$ and of a divisor $D_\infty$ of degree $3$ supported by three distinct cusps, so that $d_0 = 2g+1$ exactly, as higher values of $d_0$ would just slow Makdisi's algorithms down. Thus $V_2$ is the space of meromorphic differentials that have at most simple poles at the cusps supporting $D_\infty$ and are holomorphic elsewhere. As a result, $V_2$ is contained in the space $\M_2\big(\Gamma_H(\ell N)\big)$ of modular forms of weight $2$; more precisely, we have
\[ V_2 = \S_2\big(\Gamma_H(\ell N)\big) \oplus E, \]
where $E$ is the subspace of dimension\footnote{As shown by \eqref{V2_H0}, the fact that the dimension of $E$ is $2$ is a consequence of the Riemann-Roch theorem. Alternatively, this can be seen directly since the map that evaluates the modular forms at all of the cusps induces an isomorphism between the Eisenstein subspace and the trace zero subspace.} $2$ of the Eisenstein space $\Ei_2\big(\Gamma_H(\ell N)\big)$ formed of the series that vanish at all the cusps except those supporting $D_\infty$.

In order for $D_0$ to be defined over $\Q$, we need $D_\infty$ to be defined over $\Q$ itself. Since we assumed that $N$ is squarefree, say $N= p_1 \cdots p_r$, and that $\eps$ is trivial, this is not difficult, as by \eqref{X_H_fibred} we have
\[ X_H(\ell N) = X_K(\ell) \times_{X(1)} X_0(p_1)  \times_{X(1)} \cdots  \times_{X(1)} X_0(p_r) \]
where $K$ is the subgroup of $(\Z/\ell \Z)^*$ formed of the elements that are killed by $k-2$. Indeed, since $X(1)$ has a single cusp, this decomposition therefore yields an identification of $G_\Q$-sets
\[ \text{Cusps}\big(X_H(\ell N)\big) =  \text{Cusps}\big(X_K(\ell)\big) \times \text{Cusps}\big(X_0(p_1)\big)  \times \cdots  \times \text{Cusps}\big(X_0(p_r)\big). \]
For all $p \in \N$ prime, $X_0(p)$ has exactly two cusps, namely $\infty$ and $0$, and both are defined over $\Q$, whereas for all odd $p \in \N$, $X_1(p)$ has exactly $p-1$ cusps, that form two orbits of size $(p-1)/2$ under the $\langle d \rangle$ operators. One of these orbits is formed of cusps that project to the cusp $0$ of $X_0(p)$, and all these cusps are defined over $\Q$, whereas the other is formed of cusps that project to the cusp $\infty$ of $X_0(p)$, and these cusps are defined over the real subfield of the $p$-th cyclotomic field and thus form a single Galois orbit. As a consequence, it is easy to understand the cusps of $X_H(\ell N)$, including the Galois action on them, and so constructing $D_\infty$ poses no difficulty.

Since the level $\ell N$ is squarefree, the group spanned by the operators $W_Q$ and $\langle d \rangle$ acts transitively on the cusps, so the formulas established in section \ref{sect:WQ} allow us to compute the $q$-expansion of any modular form in $\M_2\big(\Gamma_H(\ell N)\big)$ at each of the cusp, given its $q$-expansion at the cusp $\infty$. In particular, it is easy to compute a basis of the 2-dimensional space $E$ by linear algebra. Therefore, we choose to represent the elements of $V_2$ by their $q$-expansion at all the cusps, with enough $q$-adic accuracy to ensure that Makdisi's algorithms perform correctly. More precisely, if we expand to accuracy $O(q^{B_c})$ at the cusp $c$, then it is enough that
\[\sum_{c \in D_\infty} B_c + \sum_{c \not \in D_\infty} (1 + B_c) >6 d_0 \]
since these algorithms deal with subspaces of $H^0\big(X_H(\ell N), n D_0 \big)$ for $n \in \N$ up to $6$.

\bigskip

Now that we are able to compute in the jacobian $J_H(\ell N)$ of $X_H(\ell N)$, we may proceed just as in \cite{algo}, by identifying the 2-dimensional subspace $V_{f,\l}$ of $J_H(\ell N)[\ell]$ that affords $\rho_{f,\l}$ thanks to the Fourier coefficients $a_n$ of $f$ for small $n$, inverting the Abel-Jacobi map at the points of $V_{f,\l}$, and evaluating a rational map $\alpha \in \Q\big( J_H(\ell N) \big)$ at these points, as in sections 3.5 and 3.6 of \cite{algo}. Finally, we identify the coefficients of  
\[ F(x) = \prod_{\substack{P \in V_{f,\l} \\ P \neq 0}} \big(x - \alpha(P) \big) \]
as rational numbers. If these identifications are correct and if $\alpha$ is one-to-one on $V_{f,\l}$, then the polynomial thus obtained describes the representation $\rho_{f,\l}$; in particular, its Galois group over $\Q$ is $\Im \rho_{f,\l}$.

\section{Certification of the results}\label{sect:certif}

We now wish to certify that the data computed in the previous section does define the representation $\rho_{f,\l}$. This is very likely, but not completely sure, as these data were produced by identifying rational numbers from floating point approximations.

The certification method we present here is a generalisation of the one presented in\cite{certif}. We still focus on the case where the level $N$ of $f$ is squarefree, although it is probably not difficult to drop this hypothesis, possibly at the expense of slowing down the computations. For simplicity, we also assume that the representation $\rho_{f,\l}$ is surjective; it is easy to modify our arguments when this is not the case.

In order to completely certify our data, we will eventually have to restrict to the case where the nebentypus $\eps$ of $f$ is trivial; unlike in section \ref{sect:algo}, this is a real requirement which the author does not know how to remove. It is however not necessary to make this assumption if one is only interested in the \emph{projective} representation attached to $f \bmod \l$, which is the case if one just wants to construct explicitly $\PGL_2(\F_\l)$-number fields with small discriminants.

\subsection{Reduction of the polynomials}

The polynomial $F(x)$ computed above tends to have a very large arithmetic height. More precisely, in \cite[section 2]{certif}, we predicted that the number of decimal digits of the typical denominator of the coefficients of $F(x)$ was approximately $g^{5/2}$, where $g$ is the genus of the modular curve used in the computation.

We have computed two representations (cf. the results section) mod $\ell=13$ in the jacobian of modular curves $X_H$ of respective levels $5 \cdot 13$ and $7 \cdot 13$ which both have genus $g=13$. The denominator of the polynomial $F(x)$ thus obtained has 458 decimal digits for the first representation, and 586 for the second one, which indicates that our prediction extends to the modular curves $X_H(\ell N)$ (as opposed to $X_1(\ell)$). This was expected, as our prediction is governed by the genus and not by the level.

\bigskip

Anyhow, it is extremely inconvenient to work with polynomials of such height, so we want to apply \cite{gp}'s function \verb?polredbest? to them, as this function computes a nicer polynomial defining the same number field. As noted in \cite{certif}, very often the polynomial $F(x)$ is simply too large for this to be possible; however, we may form the polynomials
\[ F_i(x) = \prod_{S \cdot P \in V_{f,\l}^{S_i}} \left( x - \sum_{s \in S_i} \alpha(s \cdot P) \right) \]
that ought to correspond to the quotient representations
\[ \xymatrix{\rho_{f,\l}^{S_i} : G_\Q \ar[r]^{\rho_{f,\l}} & \GLFl \ar@{->>}[r] & \GLFl / S_i} \]
for $0 \leqslant i \leqslant r$, and identify their coefficients as rationals, where $S_i = \{ s^{2^i}, \ s \in \F_\l^* \}$, $V_{f,\l}^S = (V_{f,\l} - \{0\}) / S$, and $r$ is the 2-adic valuation of $\# \F_\l^*$; we may then reduce these polynomials inductively on $i$ as explained in section 2 of \cite{certif}.

The point of this is that the polynomial $F_r(x)$ ought to correspond to the quotient representation $\rho_{f,\l}^{S_r}$, which contains enough information to recover $\rho_{f,\l}$ itself while being much easier to deal with, as explained in section 2 of \cite{certif}.

\subsection{Certification of the data}

Now that the polynomials have been reduced, we begin as in \cite{certif} by proving that the Galois group of $F_0(x)$ over $\Q$ is $\PGL_2(\F_\l)$, for instance thanks to the ``unordered cross-ratio'' method presented in section 3.3.1 of \cite{algo}. If we are only interested in the construction of $\PGL_2(\F_\l)$-number fields with small discriminant, we may stop here, check that the root field of $F_0(x)$ has as little ramification as expected, and add $F_0(x)$ to our collection; in fact, we did not need to compute and reduce the polynomials $F_i(x)$ for $i > 0$ in the first place.

However, if we are interested in the representation $\rho_{f,\l}$, then we need to certify that the polynomial $F_r(x)$ corresponds in the sense of \cite{certif} to the quotient representation $\rho_{f,\l}^{S_r}$. In order to do this, we now introduce a generalisation of the methods presented in \cite{certif}.

The reason why we need to modify these methods is that theorem 4 from \cite{certif}, which was used to certify the modularity of the projective representation defined by $F_0(x)$, only applies to representations attached to forms of level $1$, which is not the case in this article, and which are wildly ramified at $\ell$, which is precisely not the case we are most interested in. As a result, we present a new, more general method to certify that a projective Galois representation is modular and comes from a form of squarefree level.

\bigskip

We begin by recalling a well-known result about projective Galois representations.

\begin{lem}\label{lem:char_ext}
Let $\F$ be a topological field, $p \in \N$ a prime, and let $I_p \leqslant G_\Q$ be the inertia subgroup attached to some prime of $\overline \Q$ above $p$. Every continuous character $\chi : I_p \longrightarrow \F^*$ may be extended to a continuous character $G_\Q \longrightarrow \F^*$.  Similarly, every continuous character $\chi : W_p \longrightarrow \F^*$ may be extended to a continuous character $G_\Q \longrightarrow \F^*$, where $W_p$ is the wild inertia subgroup.
\end{lem}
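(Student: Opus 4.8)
The plan is to reduce the statement about $G_\Q$ to a purely local statement about the decomposition group $D_p$, and then to use the fact that a character of a closed subgroup of a profinite group always extends (after possibly passing to a finite quotient, where this is the standard divisibility fact about abelian groups). More precisely, I would first recall that $I_p \trianglelefteq D_p \leqslant G_\Q$ and that $D_p/I_p \cong \widehat{\Z}$ is procyclic, topologically generated by $\Frob_p$; likewise $W_p \trianglelefteq I_p$ with $I_p/W_p \cong \prod_{q \neq p} \Z_q$ the tame quotient. The first step would thus be to upgrade $\chi$ from $I_p$ (resp. $W_p$) to all of $D_p$, and the second step to extend from $D_p$ to $G_\Q$.

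For the first step: given $\chi : I_p \to \F^*$ continuous, I would argue that $\chi$ extends to $D_p$. Since $\chi$ is continuous and $\F^*$ is a topological group, $\ker \chi$ is open in $I_p$, so $\chi$ factors through a finite quotient $I_p/H$ with $H$ normal in $D_p$ (replace $H$ by the intersection of its finitely many $D_p$-conjugates, which is still open). Then $D_p/H$ is an extension of the procyclic group $D_p/I_p$ by the finite abelian group $I_p/H$, hence itself topologically finitely generated; choosing a lift $\sigma$ of $\Frob_p$, the subgroup $\langle I_p/H, \sigma\rangle$ is all of $D_p/H$, and one extends $\chi$ by setting its value on $\sigma$ to be any element of $\F^*$ whose conjugation-compatibility relation with the values of $\chi$ on $I_p/H$ can be satisfied — here one uses that $\F^*$ is divisible enough, or more simply that $\Z$ (the closure of $\langle \sigma \rangle$ mapping onto $\widehat\Z$) is free so there is no obstruction, together with the relation $\chi(\sigma x \sigma^{-1}) = \chi(x)$ which holds automatically since $\F^*$ is abelian and one only needs $\chi$ to be $\sigma$-equivariant on $I_p/H$, which it is once we note $D_p/H$ acts on the abelian group $I_p/H$ and $\chi$ is required to be a homomorphism. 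The same argument handles $\chi : W_p \to \F^*$: extend first to $I_p$ and then to $D_p$.

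For the second step: having a continuous character $\chi : D_p \to \F^*$, I would extend it to $G_\Q$. By continuity $\chi$ factors through $D_p/H$ for some open normal-in-$D_p$ subgroup $H$; shrinking, we may assume $\chi$ factors through the Galois group of a finite abelian extension of $\Q_p$, i.e. through $\Q_p^*$ by local class field theory. Composing with the norm map or rather using the global Artin map, the local character on $\Q_p^*$ can be extended to an idele class character of $\Q$ (for instance extend the character of $\Q_p^*$ trivially on the other local factors and on $\Z^*$-components as needed, then correct at the archimedean place) — this is the standard statement that every local character extends to a global Hecke character, which follows from the surjectivity of $\Q_p^* \hookrightarrow \mathbf{A}_\Q^*/\Q^*$ up to a compact factor and the divisibility of $\F^*$'s torsion, or more elementarily from the fact that $\Q_p^*$ is a direct factor-like piece of the idele class group. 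Via global class field theory this gives a continuous character $G_\Q^{\ab} \to \F^*$ restricting to $\chi$ on $D_p$, and pulling back along $G_\Q \twoheadrightarrow G_\Q^{\ab}$ finishes.

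The main obstacle is the extension from $D_p$ to $G_\Q$: one must be careful that the resulting global character is continuous and genuinely restricts to $\chi$ on the chosen decomposition group, which requires invoking local–global compatibility of class field theory rather than a soft group-theoretic argument. An alternative, cleaner route — which I would actually prefer to write up — is to bypass $D_p$ entirely: $I_p$ is a closed subgroup of the profinite group $G_\Q$, $\chi$ factors through a finite quotient $I_p/H$, and one invokes the general fact that a homomorphism from a closed subgroup of a profinite group into a finite (hence, after embedding, into a suitable) abelian group extends to the whole group provided the target is divisible; since $\F^*$ need not be divisible, one instead embeds the finite image $\mu$ of $\chi$ into $\overline{\F_p}^*$ or simply notes $\mu \subseteq \F^*$ is cyclic and uses that $G_\Q \twoheadrightarrow G_\Q^{\ab}$ together with the surjectivity of the reciprocity map from $\Q_p^*$ lets us realize any cyclic quotient. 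I would present the class-field-theoretic version as the main argument and remark that the case $\F = \overline{\F_\ell}$ or $\F_\ell$ actually occurring in the paper needs nothing more.
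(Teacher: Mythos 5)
The paper states this lemma without proof (as a recalled ``well-known result''), so there is no argument to compare against. Your proposal has a genuine gap at exactly the point where the statement is in fact false. In the first step you assert that the relation $\chi(\sigma x \sigma^{-1}) = \chi(x)$ ``holds automatically'' because $\F^*$ is abelian; but this relation is a \emph{necessary consequence} of the existence of an extension, not an a priori property of $\chi$, so the reasoning is circular. Concretely, conjugation by a Frobenius lift acts on the tame quotient $I_p/W_p \cong \prod_{q\neq p}\Z_q$ by $x\mapsto x^p$, so $\chi(\sigma x\sigma^{-1})=\chi(x)^p$, which differs from $\chi(x)$ unless $\chi^{p-1}$ is trivial on tame inertia. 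Your ``alternative, cleaner route'' has the same defect: it is simply not true that a homomorphism from a closed subgroup of a profinite group into a divisible abelian group extends to the whole group (take $A_3 \hookrightarrow S_3$ and a nontrivial $\chi : A_3 \to \mu_3 \subset \C^*$). Your second step, the extension from $D_p$ to $G_\Q$ via class field theory for $\Q$, is essentially sound; the whole difficulty is in the first step.

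Indeed the lemma is false as stated. Take $p=5$, $\F=\C$, and let $\chi : I_p \to \C^*$ be an order-$3$ character factoring through the tame quotient $\prod_{q\neq p}\Z_q \twoheadrightarrow \Z/3\Z \hookrightarrow \C^*$. Any continuous $\tilde\chi : G_\Q \to \C^*$ factors through $G_\Q^{\ab} \cong \widehat{\Z}^*$, in which the image of $I_p$ is the $p$-component $\Z_p^* \cong \mu_{p-1}\times\Z_p$; since $\C^*$ has no small subgroups, $\tilde\chi\vert_{I_p}$ has finite image of order dividing $(p-1)\,p^k = 4\cdot 5^k$, never $3$. Hence $\chi$ admits no extension to $G_\Q$. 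The correct statement --- and the one that does the work where the paper invokes this lemma in the proof of theorem \ref{thm:Serre_proj} --- is that every continuous character of the full decomposition group $D_p$ extends to $G_\Q$ (this is where class field theory for $\Q$, i.e.\ the injectivity of $\Q_p^* \to \mathbf{A}_\Q^*/\Q^*$ and triviality of the class group, enters); a character of $I_p$ extends to $G_\Q$ if and only if it already extends to $D_p$, i.e.\ if and only if it satisfies the Frobenius-invariance $\chi^p=\chi$ that you tried to obtain for free.
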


%
%


\begin{thm}[Serre, Tate]\label{thm:lift_pi}
Let $\pi : G_\Q \longrightarrow \PGL_2(\overline \Fl)$ be a projective Galois representation. There exists a lift $\rho : G_\Q \longrightarrow \GL_2(\overline \Fl)$ such that for all primes $p \in \N$,
\[ \pi \text{ is unramified at p } \Longrightarrow \rho \text{ is unramified at p } \]
and
\[ \pi \text{ is tamely ramified at p } \Longrightarrow \rho \text{ is tamely ramified at p.} \]
\end{thm}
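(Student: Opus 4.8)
The plan is to first produce \emph{some} lift of $\pi$ to $\GL_2(\overline{\Fl})$, ignoring ramification entirely, and then to twist it by a carefully chosen Galois character in order to cure the excess ramification at the finitely many primes where it behaves worse than $\pi$. For the existence of a lift, the obstruction to lifting $\pi$ along the central extension $1\to\overline{\Fl}^{\,*}\to\GL_2(\overline{\Fl})\to\PGL_2(\overline{\Fl})\to1$ is the pullback by $\pi$ of the extension class, a class in the continuous cohomology group $H^2\big(G_\Q,\overline{\Fl}^{\,*}\big)$ with \emph{trivial} coefficients, since $\overline{\Fl}^{\,*}$ is the centre of $\GL_2(\overline{\Fl})$. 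By a theorem of Tate this group vanishes, so $\pi$ admits a continuous lift $\rho_0\colon G_\Q\to\GL_2(\overline{\Fl})$. Since $\pi$ and $\rho_0$ have finite image they are unramified outside a finite set of primes, so the set $T$ of primes $p$ at which either $\pi$ is unramified but $\rho_0$ is ramified, or $\pi$ is tamely ramified but $\rho_0$ is wildly ramified, is finite; moreover $\ell$ never lies in $T$ for the second reason, because $W_\ell$ is pro-$\ell$ while $\overline{\Fl}^{\,*}$ has no element of order $\ell$, so $\rho_0|_{W_\ell}$ is automatically trivial.

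Next I would analyse the defect of $\rho_0$ at a prime $p\in T$. If $\pi$ is unramified at $p$ then $\rho_0(I_p)$ consists of scalars, so $\rho_0|_{I_p}=\psi_p\cdot\mathrm{Id}$ for a continuous character $\psi_p\colon I_p\to\overline{\Fl}^{\,*}$; if $\pi$ is tamely ramified at $p$ (hence $p\neq\ell$) then likewise $\rho_0|_{W_p}=\psi_p\cdot\mathrm{Id}$ for a character $\psi_p\colon W_p\to\overline{\Fl}^{\,*}$. The crucial point is that this $\psi_p$ is the restriction of a character of the local group $G_{\Q_p}$. Indeed $\pi|_{G_{\Q_p}}$, being unramified (resp. tamely ramified), itself possesses an unramified (resp. tamely ramified) lift $\sigma_p\colon G_{\Q_p}\to\GL_2(\overline{\Fl})$: in the unramified case because $G_{\Q_p}/I_p\cong\widehat{\Z}$ is procyclic, and in the tame case because the only obstruction to adjusting a lift --- the solvability of $c=\mu^{p-1}$ for a scalar $\mu$, where $c$ measures the failure of the tame commutation relation --- is met, as $\overline{\Fl}^{\,*}$ is divisible by $p-1$. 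Then $\rho_0|_{G_{\Q_p}}$ and $\sigma_p$ are two lifts of $\pi|_{G_{\Q_p}}$, hence differ by a character $\varphi_p$ of $G_{\Q_p}$ whose restriction to $I_p$ (resp. $W_p$) is $\psi_p$.

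Finally I would glue. By local class field theory the image of $I_p$ in $G_{\Q_p}^{\ab}$ is $\Z_p^*$, that of $W_p$ being the subgroup $1+p\Z_p$ of principal units; so $\psi_p$, being the restriction of a character of $G_{\Q_p}$, factors through $I_p\twoheadrightarrow\Z_p^*$, or through $W_p\twoheadrightarrow1+p\Z_p$, which we extend to $\Z_p^*$ via the splitting $\Z_p^*=\mu_{p-1}\times(1+p\Z_p)$. Applying Lemma~\ref{lem:char_ext} --- concretely, using the Kronecker--Weber isomorphism $G_\Q^{\ab}\cong\prod_q\Z_q^*$, under which the inertia image at $q$ is the $q$-th factor --- I extend each $\psi_p^{-1}$ to a continuous character $G_\Q\to\overline{\Fl}^{\,*}$ unramified outside $p$, and let $\chi$ be the product of these over $p\in T$, so that $\chi$ is unramified outside $T$ with $\chi|_{I_p}=\psi_p^{-1}$ (resp. $\chi|_{W_p}=\psi_p^{-1}$) for $p\in T$. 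Then $\rho:=\rho_0\otimes\chi$ is again a lift of $\pi$; at each $p\in T$ with $\pi$ unramified, $\rho|_{I_p}=\psi_p\psi_p^{-1}\cdot\mathrm{Id}$ is trivial, and at each $p\in T$ with $\pi$ tamely ramified, $\rho|_{W_p}$ is trivial, while outside $T$ the twist by $\chi$ changes nothing; both implications follow.

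The step I expect to be the main obstacle is the gluing, and within it the point that the correcting character can be taken with ramification confined to $T$ --- equivalently, that each defect character $\psi_p$ is of the restricted type visible to class field theory. This is precisely why one cannot twist $\rho_0$ blindly but must pass through the local lifts $\sigma_p$ of $\pi|_{G_{\Q_p}}$; the tamely ramified case is the more delicate, and it is there that the divisibility of $\overline{\Fl}^{\,*}$, hence the fact that we work over a field of characteristic $\ell$, is genuinely used.
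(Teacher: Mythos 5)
The paper does not give a proof of this theorem: it is stated with attribution to Serre and Tate (the result goes back to a letter of Tate quoted in Serre's 1987 Duke paper and is proved, for instance, in Ribet--Stein), and the surrounding Lemma~\ref{lem:char_ext} is also stated without proof. So there is no in-paper argument to compare yours against. That said, your proof is correct and is precisely the standard one: Tate's vanishing $H^2(G_\Q,\overline{\Fl}^{\,*})=0$ produces an initial lift $\rho_0$, and one then twists by a global character built out of local data to kill the spurious ramification at the finite set of bad primes. Two points are worth singling out. First, you correctly note that $\ell$ never enters $T$ for the tameness condition, since $W_\ell$ is pro-$\ell$ while $\overline{\Fl}^{\,*}$ is $\ell$-torsion-free; this is easy to overlook. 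Second, and more substantively, you notice that it is not automatic that the defect character $\psi_p$ extends from $I_p$ (resp.\ $W_p$) to a character of $G_\Q$ unramified outside $p$, and you supply the missing step by constructing a local unramified (resp.\ tame) lift $\sigma_p$ of $\pi\vert_{G_{\Q_p}}$ so as to realize $\psi_p$ as the restriction of the ratio character $\varphi_p$ on $G_{\Q_p}$, which then visibly factors through $\Z_p^*$ (resp.\ $1+p\Z_p$).

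This last observation matters because Lemma~\ref{lem:char_ext} as literally stated is too strong: not every continuous character of $I_p$ extends to $G_\Q$. Indeed the image of $I_p$ in $G_\Q^{\ab}\cong\prod_q \Z_q^*$ is the single factor $\Z_p^*$, so a tamely ramified character of $I_p$ of order $q$ with $q\nmid p(p-1)$ cannot extend (equivalently, it fails the necessary $D_p$-invariance $\chi\circ\operatorname{ad}(\Frob_p)=\chi^p=\chi$). Your proof invokes the lemma only for characters that you have first shown factor through $\Z_p^*$, which is the form in which the extension exists and can moreover be taken unramified outside $p$. The one point where you write slightly less than is needed is the tame local lift $\sigma_p$: beyond solving $c=\mu^{p-1}$ one must also check that the rescaled inertia generator $\mu B$ has order prime to $p$ so that $\sigma_p$ is a genuine continuous homomorphism of $G_{\Q_p}^{\mathrm{tame}}$; this does hold, because if $B^n$ is the scalar $\lambda$ then the compatibility $c^n\lambda^{p-1}=1$ (obtained by conjugating $B^n$) forces $(\mu B)^n=\mu^n\lambda$ to be a $(p-1)$-th root of unity, but it deserves a sentence.
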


Note that the field of definition of $\rho$ may be larger than the one of $\pi$.

\bigskip

Thanks to this result, we may lift projective representations into linear ones, to which we may apply Serre's modularity conjecture so as to prove that the original projective representation is modular. This is very useful for us as we want to certify that our data correspond to modular Galois representations; however, this is not enough, as we want to prove that the representation corresponding to our data is attached to the eigenform $f$ and not another one. In this view, we establish the following theorem, that allows us to prove that our data define a representation attached to a form of the same level as $f$ with very little computational effort.

\begin{thm}\label{thm:Serre_proj}
Let $\pi : G_\Q \longrightarrow \PGL_2(\overline \Fl)$ be a projective Galois representation, let $K$ be the number field corresponding via $\pi$ to the stabiliser of a point of $\Pl{\overline \Fl}$. Let $R$ be the set of primes $p \neq \ell$ at which $\pi$ ramifies. Suppose that $\pi$ is irreducible and odd, and that for all $p \in R$, $\pi$ is tamely ramified at $p$, and there exists an unramified prime $\p$ of $K$ above $p$. Then there exists a newform $f \in \S_k\big( \Gamma_1(N) \big)$ with $N = \prod_{p \in R} p$ and $2 \leqslant k \leqslant \ell+2$ such that $\pi$ is equivalent to the projective representation $\pi_{f,\l}$ attached to $f$ modulo a prime $\l$ above $\ell$.
\end{thm}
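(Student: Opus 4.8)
The plan is to reduce Theorem \ref{thm:Serre_proj} to Serre's modularity conjecture (now a theorem of Khinshin--Wintenberger and Kisin) applied to a carefully chosen linear lift of $\pi$. First I would invoke Theorem \ref{thm:lift_pi} (Serre--Tate) to obtain a lift $\rho : G_\Q \longrightarrow \GL_2(\overline\Fl)$ that is unramified wherever $\pi$ is unramified and tamely ramified wherever $\pi$ is tamely ramified. Since $\pi$ is irreducible and odd, so is $\rho$, so Serre's conjecture produces a newform $g$ of some level $N_g$ and some weight $k_g$ with $\rho \sim \rho_{g,\l'}$ for a suitable prime $\l' \mid \ell$; projectivising gives $\pi \sim \pi_{g,\l'}$. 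The whole game is then to show $g$ can be chosen of level exactly $N = \prod_{p \in R} p$ and weight $2 \leqslant k \leqslant \ell + 2$.

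The weight bound is essentially Serre's recipe for the weight together with the standard fact (Edixhoven, Coleman--Voloch) that the predicted Serre weight of a mod $\ell$ representation lies in $\{2, \dots, \ell+1\}$, with the extra value $\ell+2$ appearing only in the companion/low-weight degenerate situations — so I would just cite the appropriate normalisation of the weight part of Serre's conjecture and observe it gives $k \leqslant \ell+2$. For the level, the key point is that away from $\ell$ the Serre conductor of $\rho$ is the product of the local conductors, and tame ramification at $p \in R$ forces the local conductor exponent at $p$ to be exactly $1$: a tamely ramified two-dimensional representation of $G_{\Q_p}$ has conductor exponent equal to the codimension of its inertia invariants, which is $1$ or $2$, and the value $2$ would mean inertia acts with no nonzero fixed vector. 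This is exactly where the hypothesis ``there exists an unramified prime $\p$ of $K$ above $p$'' enters: $K$ is the fixed field of a point stabiliser in $\PGL_2$, i.e. of a Borel after conjugation, so an unramified prime of $K$ above $p$ says that the image of inertia $I_p$ under $\pi$ fixes a point of $\Pl{\overline\Fl}$, hence (lifting) that $\rho(I_p)$ is conjugate into the upper-triangular matrices and thus fixes a line — so the inertia invariants are nonzero and the conductor exponent at $p$ is exactly $1$. This forces $N_g \mid \ell^\infty N$, and since by construction $\rho$ is unramified outside $R \cup \{\ell\}$, combined with Serre's conjecture delivering a newform of level prime to $\ell$ (this is part of the statement: the Serre conductor is prime to $\ell$), we get $N_g \mid N$; and $N_g$ is divisible by every $p \in R$ because $\pi$, hence $\rho$, genuinely ramifies there, giving $N_g = N$.

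The main obstacle I expect is the bookkeeping around the prime $\ell$ itself and the passage between $\rho$ and $\pi$: Serre's conjecture is naturally stated for the linear representation $\rho$ and produces a newform whose associated $\l'$-adic-reduction is isomorphic to $\rho$, but we only control $\pi$, and the lift $\rho$ from Theorem \ref{thm:lift_pi} is only controlled up to twist by a character (and its field of definition may grow). I would handle this by noting that twisting $\rho$ by a character changes the level only at primes dividing the conductor of that character, and using Lemma \ref{lem:char_ext} to arrange, if necessary, that any such twist is by a character unramified outside $\ell$ and $R$ — or more cleanly, by observing that we are free to absorb any tame twist at $\ell$ into the weight via Serre's twisting relations, and any twist at $p \in R$ keeps the conductor exponent at $p$ equal to $1$ by the same invariants argument, since the hypothesis on $K$ is twist-stable. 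Once $\rho$ (possibly replaced by a twist) has conductor exactly $N$ prime to $\ell$ and weight in $[2,\ell+2]$, Serre's conjecture yields the desired $f$, and projectivising the isomorphism $\rho \sim \rho_{f,\l}$ gives $\pi \sim \pi_{f,\l}$, completing the proof.
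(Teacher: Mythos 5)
Your overall approach matches the paper's: Serre--Tate lift via Theorem \ref{thm:lift_pi}, then Khare--Wintenberger, then control the weight via twisting by characters of $\ell$-power conductor, then argue that the Serre conductor is exactly $N$ using the unramified-prime hypothesis on $K$. The weight-bound step is handled the same way (the paper cites Ribet--Stein, Theorem 2.7, for the reduction to $2 \leqslant k \leqslant \ell+2$ after a twist).

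However, there is a genuine gap in your conductor argument. You assert that since $\pi(I_p)$ fixes a point of $\Pl{\overline\Fl}$, the lift $\rho(I_p)$ is conjugate into the upper-triangular matrices and ``thus fixes a line --- so the inertia invariants are nonzero.'' Stabilising a line \emph{setwise} is not the same as having nonzero pointwise inertia invariants: if $\rho\vert_{I_p} \sim \smat{\chi_p}{*}{0}{\chi'_p}$ with both $\chi_p$ and $\chi'_p$ nontrivial on $I_p$ (which certainly happens when the determinant of $\rho$ ramifies at $p$, e.g.\ for a nebentypus of conductor divisible by $p$), then $V^{\rho(I_p)} = 0$ and the local conductor exponent is $2$, not $1$. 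The twist is therefore not a side issue to be mopped up at the end --- it is the very step that makes the invariants nonzero. The paper's proof does this explicitly and carefully: it reads off $\rho\vert_{I_\P} \sim \smat{\chi_p}{*}{0}{*}$ from the tower of local fields (using that $\p$ is unramified in $K/\Q$ to get $I_\P \subseteq B := \Gal(L/K)$), extends $\chi_p$ from $I_\P$ to all of $G_\Q$ via Lemma \ref{lem:char_ext}, and replaces $\rho$ by $\rho \otimes \prod_{p \in R} \chi_p^{-1}$; only \emph{after} this global twist is the local picture $\smat{1}{*}{0}{*}$, whence $n_p \leqslant 1$. It also records that this twist does not affect the Serre weight because $\ell \notin R$, so the conductor of each $\chi_p$ is prime to $\ell$. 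Your closing paragraph gestures at twist-stability but is circular: you appeal to ``the same invariants argument'' to show the twist preserves $n_p = 1$, when in fact $n_p = 1$ was never established before the twist. To repair the proof, move the twist to the front of the conductor computation, exactly as in the paper.
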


\begin{proof}
By theorem \ref{thm:lift_pi}, there exists a lift $\rho : G_\Q \longrightarrow \GL_2(\overline \Fl)$ of $\pi$ which is irreducible, odd, unramified at the primes at which $\pi$ is unramified, and tamely ramified at all primes $p \neq \ell$. Therefore, Serre's modularity conjecture, which was proved by Khare and Wintenberger in \cite{KW}, implies that there exists a newform $f \in \S_k\big( \Gamma_1(N) \big)$ such that $\rho$ is equivalent to the representation attached to $f$ mod $\l$, where $N = \prod_{p \in R} p^{n_p}$ is the Serre conductor of $\rho$ and $\l$ is a prime above $\ell$. Besides, according to \cite[theorem 2.7]{RibetStein}, after twisting by a character of $\ell$-power conductor (which does not affect $\pi$), we may suppose that $2 \leqslant k \leqslant \ell+2$, so to conclude we only have to prove that $n_p = 1$ for all $p \in R$.

Let $p \in R$, so that by our hypothesis there exists an unramified prime $\p$ of $K$ above $p$. Let $\P$ be a prime of $\overline \Q$ above $\p$, and let $I_\P \leqslant D_\P \leqslant G_\Q$ be its inertia and decomposition subgroups. Since $\rho$ is tamely ramified at $p$, the local exponent of its conductor is just
\[ n_p = \codim V^{\rho(I_\P)}, \]
where $V \simeq \overline \Fl^2$ is the space of the representation. As $\pi$ ramifies at $p$, so does $\rho$, so $\rho(I_p)$ is not trivial and $n_p \geqslant 1$. Let $L$ be the Galois number field cut out by $\rho$, and let $B = \Gal(L/K)$, so that $\rho(B)$ is contained in a Borel subgroup of $\GL_2(\overline \Fl)$ by definition of $K$. As $\p$ is unramified, we have a tower of local extensions
\[ \xymatrix{
L_\P \ar@{-}[d] \ar@/_5pc/@{-}[ddd]_{D_\P} \ar@/_1pc/@{-}[d]_{I_\P} \ar@/^3pc/@{-}[dd]^{D_\P \cap B} \\
L_\P^\text{ur} \ar@{-}[d] \\
K_\p \ar@{-}[d] \\
\Q_p
} \]
where $L_\P^\text{ur}$ is the maximal unramified subextension of $L_\P$. Therefore, $I_\P$ is contained in $B$, so $\rho \vert_{I_\P} \sim \smat{\chi_p}{*}{0}{*}$ for some character $\chi_p : I_\P \longrightarrow \overline \Fl^*$, which we may extend to $G_\Q$ by lemma $\ref{lem:char_ext}$. After replacing $\rho$ with $\rho \otimes \prod_{p \in R} \chi_p^{-1}$, which does not affect the weight since $\ell \not \in R$, we may thus suppose that $\rho \vert_{I_\P} \sim \smat{1}{*}{0}{*}$, so that $n_p \leqslant 1$. This concludes the proof.
\end{proof}

\begin{rk}
Conversely, let $\rho = \rho_{f,\l}$ be a mod $\ell$ representation attached to an eigenform $f \in \S_k\big( \Gamma_1(N) \big)$. We may suppose that $N$ is minimal, that is to say that it is the Serre conductor $\prod_{p \neq \ell} p^{n_p}$ of $\rho$, where
\begin{equation} n_p = \codim V^{\rho(I_p)} + \sum_{n \geqslant 1} \frac1{[I_p : I_p^{(n)}]} \codim V^{\rho(I_p^{(n)})} \label{local_cond} \end{equation}
and the $I_p^{(n)} \leqslant W_p$ are the higher ramification groups for $n \geqslant 1$. Then if $p \in \N$ is a prime such that $p \parallel N$ so that $n_p=1$, the equation \eqref{local_cond} implies that $\rho$ is tamely ramified at $p$ and that the inertia at $p$ fixes a dimension 1 subspace of $V$, so that the number field $K$ corresponding by $\rho$ to the stabiliser of a point in $\Pl{\F_\l}$ has an unramified prime $\p$ above $p$. Therefore, the implication between the existence of an unramified prime $\p$ and the fact that $n_p=1$ is actually an equivalence, so that theorem \ref{thm:Serre_proj} yields a very efficient and general-purpose criterion that we can use to prove that a polynomial with Galois group a subgroup of $\PGL_2(\F_{\ell^m})$ defines a projective representation which is modular of squarefree level.
\end{rk}

Thanks to this criterion, we are able to prove that the polynomial $F_0(x)$ that we have computed defines a projective representation $\pi_F$ attached to an eigenform $f'$ of the same level $N$ as $f$ and of weight $2\leqslant k \leqslant \ell+2$ modulo a prime $\l'$. We now want to ensure that this representation is actually attached to $f$ modulo the prime $\l$. We will prove this by listing all the possible candidate forms, and eliminating them one by one. Of course, the fact that we have already determined the level narrows down this search considerably.

To do so, we apply a generalisation of the technique presented in the second half of section 3.3.2 of \cite{certif}: for each prime $p$ such that
\begin{equation} \begin{array}{c} F_0(x) \bmod p \text { is squarefree and splits as a product of} \\ \text{linear or quadratic factors, but does not split completely,} \label{cond_p_invol} \end{array} \end{equation}
we know that $\pi_F(\Frob_p)$ is of order exactly $2$, so that its trace is zero. As a result, any eigenform $f'$ such that $\pi_{F} \sim \pi_{f',\l'}$ for some $\l'$ must satisfy $a_p(f') \equiv 0 \bmod \l'$.

We thus form the list of couples $(f',\l')$, where $f'$ is a newform of level $\Gamma_1(N)$ and weight\footnote{We could also have used the range $1 \leqslant k \leqslant \ell+1$ throughout this section; we prefer the interval $2 \leqslant k \leqslant \ell+2$ because cuspforms of weight $1$ are difficult to compute, so that using the range $1 \leqslant k \leqslant \ell+1$ would make the generation of the list of the $(f',\l')$ unnecessarily delicate.} between $2$ and $\ell+2$, and $\l'$ is a prime of the Hecke field of $f'$ of the appropriate degree above $\ell$; then we start looking for primes $p$ satisfying the condition \eqref{cond_p_invol}, and for each such prime we eliminate the couples $(f',\l')$ that fail to satisfy the condition $a_p(f') \equiv 0 \bmod \l'$. This is very efficient, as each such prime $p$ divides the size of the list roughly by $\# \F_\l$. Besides, in order to speed up the computation, we can replace the condition $a_p(f') \equiv 0 \bmod \l'$ by $N^{K_f'}_\Q\big(a_p(f')\big) \equiv 0 \bmod \ell$ where $N^{K_f'}_\Q$ is the norm from the Hecke field of $f'$ to $\Q$, which is weaker but just as discriminating in practice, and allows us to barely have to deal with the different possible primes $\l'$ above $\ell$ at all.

We stop when all the remaining couples $(f',\l')$ correspond to the same projective representation. In general, this happens when the only couple left on the list is $(f,\l)$ itself, except of course when $f$ admits a companion mod $\ell$, in which case we wait for the list to reduce to the couple $(f,\l)$ and the companion couple. Typically, it is enough to consider the primes $p \leqslant 100$ to achieve this.

\bigskip

We are thus able to certify that $F_0(x)$ defines the projective representation attached to $f \bmod \l$. If the nebentypus $\eps$ of $f$ is trivial, we may then apply without any modification the ``group cohomology method'' presented in section 3.6 of \cite{certif} to certify that the polynomial $F_r(x)$ defines the quotient representation  $\rho_{f,\l}^{S_r}$ attached to $f \bmod \l$, and to compute the image of Frobenius elements in a certified way. Unfortunately, the author does not know at present how to do the same thing if $\eps$ is not trivial.

\newpage

\section{Results}\label{sect:results}

We have used the above algorithms to compute the mod $13$ Galois representations attached to the primitive newforms
\[ q + 2q^2 - 4q^3 +O(q^4) \in \newf_6\big( \Gamma_0(5) \big) \]
and
\[ q - 6q^2 - 42q^3 + O(q^4) \in \newf_8\big( \Gamma_0(7) \big) \]
of respective LMFDB labels \mfref{5.6.1.a} and \mfref{7.8.1.a}. The former is supersingular mod 13, whereas the latter admits \mfref{7.6.1.a} as a companion mod $13$. The reason for the choice of these forms is that the corresponding Galois representations are afforded in the torsion of Jacobian of modular curves $X_H(\ell N)$ whose genus is moderate, namely $g=13$ in both cases; in particular, working with \mfref{7.6.1.a} instead of \mfref{7.8.1.a} would have led to computing the same projective representation but a different linear representation in a different curve of higher genus. Similarly, the projective representation attached to \mfref{5.6.1.a} also comes from \mfref{5.10.1.a} mod 13, but using the former leads to a modular curve of lower genus than with the latter.

We are currently computing the mod $41$ representation attached to the form
\[ q + 1728 q^2 - 59049 q^3 + O(q^4) \in \newf_{22} \big( \Gamma_0(3) \big)  \]
of LMFDB label \mfref{3.22.1.b} that admits \mfref{3.20.1.b} as a companion; the genus of $X_H(3 \cdot 41)$ is $g=25$ in this case, which is to the very top of the range of genera that are reasonably amenable to computation with our method.

In both cases, the computation of the Galois representations mod 13 took about 12 hours, after which the reduction of the polynomials by the inductive method took just a few minutes (as a comparison, the direct reduction of the polynomial $F_r(x)$ takes about 90 hours), and finally the whole certification process took less than a minute.

\subsection{The polynomials for the projective representations}

Recall that under GRH, we have
\[ \liminf_{n \rightarrow \infty} \inf_{\substack{[K : \Q] = n \\ \sign(K)=(r_1,r_2)}} \vert \disc K \vert^{1/n} \geqslant 8 \pi e^{\gamma + \frac\pi2 \frac{r_1}n} = 44.763 \dots  \times (4.810 \cdots)^{r_1/n}, \] 
cf. \cite{SerreDisc}.

We have computed and certified that the field corresponding to the stabiliser of a point of $\Pl{\F_{13}}$ via the projective representation attached to \mfref{5.6.1.a} mod 13 is defined\footnote{Polynomials defining a fixed number field are of course never unique; however, the polynomials that we display in this section are the output of \cite{gp}'s function \verb?polredabs?, which makes them canonical.} by the polynomial
\begin{multline*}
x^{14} - x^{13} - 26x^{11} + 39x^{10} + 104x^{9} - 299x^{8} - 195x^{7} \\ + 676x^{6} + 481x^{5} - 156x^{4} - 39x^{3} + 65x^2 - 14x + 1.
\end{multline*}
This polynomial has thus signature $(2,6)$ and a Galois group that is permutation-isomorphic to $\PGL_2(\F_{13}) \circlearrowleft \Pl{\F_{13}}$. Besides, the root discriminant of its stem field is
\[ (5^{12}13^{13})^{1/14} = 43.002 \dots, \]
which beats the smallest (as of November 2016) root discriminant for a $\PGL_2(\F_{13})$ field contained in the database \cite{KlunersMalle} which is
\[ (2^{14} 3^{12} 13^{13})^{1/14} = 55.509 \cdots. \]
Even better, it is significantly lower than the estimation
 \[ 8 \pi e^{\gamma+\pi/14} = 56.024 \cdots \]
for a field of this signature, and even lower than the estimation
\[ 8 \pi e^\gamma = 44.763 \cdots \]
for any number field of any signature. Nevertheless, our field is beaten by the record (as of November 2016) from the \cite{LMFDB}, namely the $\PGL_2(\F_{13})$-field \href{http://lmfdb.warwick.ac.uk/NumberField/14.2.20325604337285010030592.1}{14.2.20325604337285010030592.1} computed by Noam Elkies, whose root discriminant is only
\[ (2^{26}13^{13})^{1/14} = 39.213 \cdots. \] 

However, if we move to Galois closures, then our field sharply beats both the aforementioned field from \cite{KlunersMalle} and Elkies's. Indeed, the root discriminant of the Galois closure of our field is  
\[ 5^{12/13} 13^{13/14} = 47.816 \cdots, \]
which incidentally is close to $8 \pi e^{\gamma}$, whereas it is
\[ 2^{13/6} 13^{167/156} = 69.939 \cdots \]
for Elkies's field, and
\[ 2^{7/6} 3^{12/13} 13^{167/156} = 96.407 \cdots \]
for the above field from \cite{KlunersMalle}, and even worse for the other $\PGL_2(\F_{13})$-fields from \cite{KlunersMalle} and the \cite{LMFDB}. This is due to the fact that our field is tamely ramified at all primes, whereas the others aren't. According to \cite[p. 14]{Roberts_companion}, it is possible that our field is the $\PGL_2(\F_{13})$ field whose Galois closure has the smallest root discriminant.

\vspace{1cm}

Similarly, we have computed and certified that the polynomial corresponding to the projective representation attached to \mfref{7.8.1.a} mod 13 is
\[ x^{14} - 52x^7 + 91x^6 + 273x^5 - 364x^4 - 1456x^3 - 455x^2 + 1568x + 1495. \]
This polynomial has the unexpected property that all 6 terms from $x^{13}$ to $x^8$ included are missing. The root discriminant of its root field is
\[ (7^{12} 13^{11})^{1/14} = 39.775 \cdots \]
which is even better than our previous example and narrowly misses beating Elkies's; however the root discriminant of its Galois closure is
\[ 7^{12/13} 13^{11/12} = 63.271 \cdots, \]
which is a little less good than our previous example but still beats Elkies's, \cite{KlunersMalle} and the \cite{LMFDB}, again thanks to the fact that it is tamely ramified.

\subsection{The polynomials for the quotient representations}

For both of these representations mod $13$, we have also computed the polynomials $F_r(x)$ introduced in section \ref{sect:certif}, and certified that these polynomials are correct thanks to the group cohomology method presented in \cite{certif}. We have then computed the Dokchitsers' resolvents, that may be used to determine the image in $\GL_2(\F_{13})$ (up to similarity of course) of Frobenius elements, and in particulat to recover the value mod $13$ of the coefficients $a-P$ of these forms for huge primes $p$. All these data are available for download on the author's web page located at \url{https://www2.warwick.ac.uk/fac/sci/maths/people/staff/mascot/galreps/}.

\bigskip

For the representation attached to \mfref{5.6.1.a} mod 13, the polynomial $F_r(x)$ is
\begin{tiny}
\[ \hspace{-1cm} \arraycolsep=1.4pt \begin{array}{ccl} F_2(x) & = &
x^{56} - 19 \, x^{55} + 176 \, x^{54} - 1099 \, x^{53} + 5292 \, x^{52} - 19916 \, x^{51} + 53755 \, x^{50} - 82979 \, x^{49} - 11609 \, x^{48} + 418938 \, x^{47} - 1351519 \, x^{46} + 3570307 \, x^{45} \\
&& - 8104499 \, x^{44} + 9946931 \, x^{43} + 5331934 \, x^{42} - 12684220 \, x^{41} - 180933386 \, x^{40} + 956990587 \, x^{39} - 2345057533 \, x^{38} + 2930653050 \, x^{37} \\
&& - 366740868 \, x^{36} - 2647967569 \, x^{35} - 10686690040 \, x^{34} + 66782657110 \, x^{33} - 169078436150 \, x^{32} + 261459165916 \, x^{31} - 253975820897 \, x^{30} \\
&& + 159187764447 \, x^{29} - 272743393068 \, x^{28} + 1165595337221 \, x^{27} - 3256037467741 \, x^{26} + 6113796826345 \, x^{25} - 8131597368544 \, x^{24} \\
&& + 7180532683571 \, x^{23} - 2160263809470 \, x^{22} - 5641397045687 \, x^{21} + 12758000383973 \, x^{20} - 15558252071934 \, x^{19} + 12690172501916 \, x^{18} \\
&& - 6215260751330 \, x^{17} + 180457670019 \, x^{16} + 2797189991937 \, x^{15} - 3474577634674 \, x^{14} + 4227913001201 \, x^{13} - 5838445844387 \, x^{12} \\
&& + 6919193824400 \, x^{11} - 5805277968711 \, x^{10} + 2648204866489 \, x^{9} + 369252764894 \, x^{8} - 1933374840137 \, x^{7} + 1819874305834 \, x^{6} \\
&& - 1245647904878 \, x^{5} + 908803702639 \, x^{4} - 675346876626 \, x^{3} + 345380525276 \, x^{2} - 96857560911 \, x + 7979838361.
\end{array} \]\end{tiny}

For the one attached to \mfref{7.8.1.a} mod 13, it is
\begin{tiny}
\[ \hspace{-1cm} \arraycolsep=1.4pt \begin{array}{ccl} F_2(x) & = &
x^{56} - 14 \, x^{55} + 69 \, x^{54} - 82 \, x^{53} - 396 \, x^{52} + 823 \, x^{51} + 3351 \, x^{50} - 11931 \, x^{49} + 8522 \, x^{48} - 35835 \, x^{47} + 186446 \, x^{46} - 8847 \, x^{45} - 854460 \, x^{44} \\
&& - 743676 \, x^{43} + 4590031 \, x^{42} + 7212191 \, x^{41} - 22038546 \, x^{40} - 38957922 \, x^{39} + 49157879 \, x^{38} + 243902411 \, x^{37} - 180717704 \, x^{36} \\
&& - 988889224 \, x^{35} + 704374598 \, x^{34} + 4859375083 \, x^{33} - 3763415241 \, x^{32} - 16386779936 \, x^{31} + 21701597191 \, x^{30} + 46834006724 \, x^{29} \\
&& - 85332561468 \, x^{28} - 70138311949 \, x^{27} + 302231735974 \, x^{26} - 10052385427 \, x^{25} - 632464301217 \, x^{24} + 556951211889 \, x^{23} + 1081393994453 \, x^{22} \\
&& - 1845293759824 \, x^{21} - 358646925616 \, x^{20} + 3673731123829 \, x^{19} - 1686600977427 \, x^{18} - 4103844332008 \, x^{17} + 5303152415742 \, x^{16} \\
&& + 2644700341946 \, x^{15} - 8175629100848 \, x^{14} + 3069957630241 \, x^{13} + 5747922498716 \, x^{12} - 4528557603372 \, x^{11} - 3341692089599 \, x^{10} \\
&& + 6603867688269 \, x^{9} - 2399016765221 \, x^{8} - 1314878616927 \, x^{7} + 1252052945123 \, x^{6} - 5862989822 \, x^{5} - 159810157800 \, x^{4} - 23334202447 \, x^{3} \\
&& + 35386045540 \, x^{2} + 9146004182 \, x + 973774019.
\end{array} \]\end{tiny}

\section{What to expect in general}\label{sect:what_to_expect}

\subsection{Computation of the root discriminant}

We now want to establish a formula that will allow us to predict the value of the root discriminant of the fields that we obtain when we compute the projective representation attached to a form of squarefree level and which admits a companion or is supersingular.

We begin by recalling some well-known relations between the discriminant of a number field, the ramification and splitting behaviour of primes in this field, and the Galois action.

\begin{lem}\label{lem:primes}
Let $T(x) \in \Q[x]$ be an irreducible polynomial of degree $n$, \linebreak $Z = \{ z_1, \cdots, z_n \}$ be the set formed by its roots in $\overline \Q$, $K = \Q(z_1)$ be the associated number field of degree $n$, $L=\Q(z_1,\cdots,z_n)$ its Galois closure, and $G = \Gal(L/\Q)$ its Galois group. Fix a prime $p \in \N$ and a prime $\P$ of $L$ above $p$, and let $I_\P \trianglelefteq D_\P \leqslant G$ be the inertia and decomposition groups of $\P$. Then the map that associates to $g \in G$ the prime $(g \cdot \P) \cap K$ of $K$ induces a bijection between the orbits of $D_\P$ acting on $Z$ and the primes $\p$ of $K$ above $p$. Furthermore, given such an orbit $\omega$, the inertial degree $f_{\p/p}$ of the corresponding prime $\p$ is the number of orbits of $\omega$ under $I_\P$, and these orbits all have the same size, which agrees with the ramification index $e_{\p/p}$ of $\p$. Finally, if the ramification is tame at $p$, that is to say if the indices $e_{\p/p}$ are coprime to $p$ for all $\p \mid p$, then the $p$-adic valuation of the root discriminant of $K$ is
\[ \alpha_p = 1-\frac1n \sum_{\p \mid p} f_{\p/p} = 1 - \frac1n \# I_\P \backslash Z, \]
and the one of the root discriminant of $L$ is
\[ \beta_p = 1-\frac1{\lcm_{\p \mid p} e_{\p / p}} = 1-\frac1{\lcm_{\omega \in I_\P \backslash Z} \# \omega}. \] 
\end{lem}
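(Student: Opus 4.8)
The plan is to work entirely with the ramification and decomposition groups $I_\P \trianglelefteq D_\P \leqslant G$ and translate everything into combinatorics of their actions on the root set $Z$, on which $G$ acts transitively (as $K = \Q(z_1)$ is the stem field of an irreducible polynomial). First I would establish the bijection between primes $\p$ of $K$ above $p$ and orbits of $D_\P$ on $Z$: if we identify $Z$ with $G/\Stab_G(z_1)$, then primes of $K$ above $p$ correspond to double cosets $D_\P \backslash G / \Stab_G(z_1)$ by the standard theory of decomposition of primes in a Galois extension and its subfields, and such double cosets are exactly the $D_\P$-orbits on $Z$. For a fixed orbit $\omega$, the corresponding prime $\p$ has $e_{\p/p} f_{\p/p} = \#\omega$ (assuming $p$ unramified in the base, which is implicit here since we are only tracking $p$-adic data of $K$ and $L$ themselves), and then $f_{\p/p}$ counts the $I_\P$-orbits inside $\omega$ while $e_{\p/p}$ is the common size of these $I_\P$-suborbits — the fact that they all have the same size follows because $D_\P/I_\P$ is cyclic and acts transitively on $\omega$'s set of $I_\P$-orbits, permuting them in a single cycle and hence transporting one $I_\P$-suborbit onto another bijectively.

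Next I would turn to the discriminant formula for $K$. Under the tameness hypothesis, the conductor–discriminant / tame-conductor computation gives that the exponent of $p$ in $\disc K$ is $\sum_{\p \mid p} e_{\p/p} f_{\p/p} - f_{\p/p} = \sum_{\p}(e_{\p/p}-1)f_{\p/p} = n - \sum_{\p} f_{\p/p}$, using $\sum_\p e_{\p/p} f_{\p/p} = n$. Dividing by $n$ gives $\alpha_p = 1 - \frac1n \sum_{\p\mid p} f_{\p/p}$. To rewrite this as $1 - \frac1n \#(I_\P \backslash Z)$, I would observe that summing $f_{\p/p}$ over all $\p \mid p$ counts, across all $D_\P$-orbits $\omega$ on $Z$, the number of $I_\P$-orbits contained in each; since every $I_\P$-orbit lies in exactly one $D_\P$-orbit, this total is simply the number of $I_\P$-orbits on all of $Z$, i.e. $\#(I_\P \backslash Z)$.

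For $L$ itself, the ramification index of $\P$ over $p$ is $e_{\P/p} = \# I_\P$, which under tameness equals $\lcm$ of the $e_{\p/p}$ as $\p$ ranges over primes of $K$ above $p$ — because $I_\P$, being the inertia at the top, is cyclic of order the $\lcm$ of the inertia indices of the intermediate fields it dominates (each $e_{\p/p}$ divides $\#I_\P$, and no smaller common multiple can kill $I_\P$ acting on $Z$ faithfully-enough; more carefully, $\#I_\P$ is the l.c.m. of the orders of the cyclic actions of $I_\P$ on its various orbits, and those orbit-sizes are exactly the $e_{\p/p}$, which is the content of $\lcm_{\p\mid p} e_{\p/p} = \lcm_{\omega} \#(I_\P\text{-orbits in }\omega)$). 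The tame-conductor formula for the Galois extension $L/\Q$ then gives $p$-adic valuation of $\disc L$ equal to $[L:\Q](1 - 1/e_{\P/p})$, so the root discriminant has valuation $\beta_p = 1 - 1/e_{\P/p} = 1 - 1/\lcm_{\p\mid p} e_{\p/p} = 1 - 1/\lcm_{\omega \in I_\P\backslash Z}\#\omega$.

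The main obstacle I anticipate is pinning down cleanly the claim that the $I_\P$-suborbits inside a given $D_\P$-orbit all have equal size and that $\#I_\P$ is the l.c.m. of all the $e_{\p/p}$; both rest on the cyclicity of $D_\P/I_\P$ together with a careful bookkeeping of how $D_\P$ permutes $I_\P$-orbits, and it is easy to state loosely. Everything else — the double-coset bijection, the tame conductor–discriminant formula, and the arithmetic rearrangement $\sum_\p f_{\p/p} = \#(I_\P\backslash Z)$ — is standard algebraic number theory that I would invoke with a pointer to, e.g., Serre's \emph{Local Fields} or Neukirch, rather than reprove.
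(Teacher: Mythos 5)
The paper states this lemma without proof, treating it as a collection of well-known facts; there is thus no paper argument to compare against, and I will simply assess correctness.

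Your proof is correct. The double-coset bijection $D_\P \backslash G / H \leftrightarrow \{\p \mid p\}$ (with $H = \Stab_G(z_1)$), the identification $e_\p f_\p = \#\omega$, the reading-off of $e_\p$ as the common size of $I_\P$-suborbits and $f_\p$ as their number, the rearrangement $\sum_{\p\mid p} f_\p = \#(I_\P\backslash Z)$, and the tame conductor--discriminant computations for $K$ and $L$ are all right. The one step that deserves a word of caution is the identification $\#I_\P = \lcm_{\p\mid p} e_\p$: this rests on the fact that $I_\P$, being cyclic (by tameness) and acting \emph{faithfully} on $Z$ (faithfulness is because $L$ is the splitting field of $T$, so $G \hookrightarrow \Sym(Z)$), has order equal to the $\lcm$ of its cycle lengths; you gesture at this but the faithfulness is worth flagging explicitly. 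Two very minor points of hygiene: your parenthetical ``assuming $p$ unramified in the base'' is a red herring, since the base is $\Q$ and the formula $\sum_{\p\mid p} e_\p f_\p = n$ holds unconditionally; and the equal-size claim for $I_\P$-suborbits needs only normality of $I_\P$ in $D_\P$ together with transitivity of $D_\P$ on $\omega$ (an element $d \in D_\P$ carries the $I_\P$-orbit of $z$ bijectively onto that of $dz$ because $I_\P d = d I_\P$), so the cyclicity of $D_\P/I_\P$ is not actually needed there, though of course it does no harm.
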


Examining the action of inertia through a modular Galois representation then leads to the following formulas:

\begin{thm}\label{thm:predict_disc}
Let $\pi_{f,\l} \colon G_\Q \longrightarrow \PGL_2(\F_\l)$ be the projective representation attached to a newform $f = q + \sum_{n \geqslant 2} a_n q^n \in \newf_k(N,\eps)$ of weight $2 \leqslant k \leqslant \ell+1$, where $\l$ is a prime of the Hecke field of $f$ above an odd prime $\ell$. Suppose that the image of $\pi_{f,\l}$ is not too small, in that it acts transitively on $\PP^1(\F_\l)$. Let $m$ be the degree of $\ell$, let $K$ be the number field of degree $\ell^m+1$ corresponding via $\pi_{f,\l}$ to the stabiliser of a point of $\PP^1(\F_\l)$, and let $L$ be its Galois closure, which is thus the field cut out by $\pi_{f,\l}$. Assume that $N$ is squarefree and that $\ell \nmid N$. Let $M$ be the conductor of $\eps \bmod \l$, and for each prime $p \mid M$, let $r_p \in \N$ denote the multiplicative order of the $p$-part of $\eps \bmod \l$. Finally, let $N'$ be the product of the primes $p \neq \ell$ such that the \emph{linear} representation attached to $f \bmod \l$ is ramified at $p$, so that $M \mid N' \mid N$. Then the root discriminant of $K$ is
\[ d_K = \ell^\alpha \left( \frac{N'}{M} \right)^{\frac{1-1/\ell}{1+1/\ell^m}} \left( \prod_{p \mid M} p^{1-1/r_p} \right)^{\frac{\ell^m-1}{\ell^m+1}} \]
for some $\alpha \in \Q_{>0}$, whereas the root discriminant of $L$ is
\[ d_L = \ell^\beta \left( \frac{N'}{M} \right)^{1-1/\ell}  \prod_{p \mid M} p^{1-1/r_p} \]
for some $\beta \in \Q_{>0}$.
Furthermore, if the coefficient $a_\ell$ of $f$ is not $0 \bmod \l$ and if $f \bmod \l$ admits a companion form, then $\pi_{f,\l}$ is tamely ramified at $\ell$ and we have
\[ \beta = 1-\frac{\gcd(k-1,\ell-1)}{\ell-1}, \quad \alpha = \frac{\ell^m-1}{\ell^m+1}\beta, \]
whereas if the coefficient $a_\ell$ of $f$ is $0 \bmod \l$, then $\pi_{f,\l}$ is again tamely ramified at $\ell$ and we have
\[ \beta = 1-\frac{\gcd(k-1,\ell+1)}{\ell+1}, \quad  \alpha = \left\{ \begin{array}{ll} \beta & \text{ if } m \text{ is odd,} \\ \frac{\ell^m-1}{\ell^m+1} \beta & \text{ if } m \text{ is even.} \end{array} \right. \]
\end{thm}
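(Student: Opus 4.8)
The plan is to reduce everything to a local analysis at each prime where $\pi_{f,\l}$ ramifies, and then feed the result into Lemma~\ref{lem:primes}. The transitivity hypothesis makes $K$ a number field of degree $n = \ell^m+1$ whose Galois closure is the field $L$ cut out by $\pi_{f,\l}$, so Lemma~\ref{lem:primes} applies and gives $d_K = \prod_p p^{\alpha_p}$, $d_L = \prod_p p^{\beta_p}$, where at a tamely ramified prime $p$ one has $\alpha_p = 1 - \frac1n \# I_\P \backslash Z$, while $\beta_p$ is $1$ minus the reciprocal of the least common multiple of the lengths of the orbits of $I_\P$ on $Z = \Pl{\F_\l}$, the latter regarded as a $G_\Q$-set via $\pi_{f,\l}$. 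The representation $\pi_{f,\l}$ is unramified outside $\ell N'$, and it is ramified at each $p \mid N'$, since there $\rho_{f,\l}(I_p) \neq 1$ by definition of $N'$ and, in each case below, this image is unipotent or regular semisimple, hence non-central. So the contributing primes are $\ell$ and those dividing $N'$, and for each I must (i) check tameness of $\pi_{f,\l}$, (ii) pin down the conjugacy class in $\PGL_2(\F_\l)$ of a generator of the cyclic group $\pi_{f,\l}(I_p)$, and (iii) count its orbits on $\Pl{\F_\l}$.

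Fix a prime $p \mid N'$ with $p \neq \ell$. Since $N$, hence $N'$, is squarefree, $p \parallel N'$, so the mod~$\l$ conductor exponent of $\rho_{f,\l}$ at $p$ is $1$ and $\rho_{f,\l}$ is tamely ramified there. If $p \nmid M$, the mod~$\l$ nebentypus is unramified at $p$, so the conductor-exponent-$1$ local representation must be of special (Steinberg) type; hence $\rho_{f,\l}(I_p)$ is unipotent, non-trivial because $p \mid N'$, and therefore of order exactly $\ell$. A non-trivial unipotent of $\PGL_2(\F_\l)$ fixes one point of $\Pl{\F_\l}$ and permutes the remaining $\ell^m$ points as $\ell^{m-1}$ cycles of length $\ell$ — the cosets of the prime subfield $\F_\ell$ in $(\F_{\ell^m},+)$ — so $\# I_\P \backslash Z = 1 + \ell^{m-1}$, whence $\alpha_p = \frac{\ell^{m-1}(\ell-1)}{\ell^m+1} = \frac{1-1/\ell}{1+1/\ell^m}$ and $\beta_p = 1 - \frac1\ell$. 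If instead $p \mid M$, then $p$ divides the conductor of $\eps \bmod \l$, the local representation is a ramified principal series, and $\rho_{f,\l}|_{I_p}$ is conjugate to a diagonal matrix with entries a character of order $r_p$ and $1$; thus $\pi_{f,\l}(I_p)$ is a cyclic subgroup of order $r_p$ of a split Cartan, which fixes $2$ points of $\Pl{\F_\l}$ and divides the remaining $\ell^m-1$ into $(\ell^m-1)/r_p$ orbits of length $r_p$, giving $\alpha_p = \big(1-\frac1{r_p}\big)\frac{\ell^m-1}{\ell^m+1}$ and $\beta_p = 1-\frac1{r_p}$.

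At $\ell$ I invoke the known local description of $\rho_{f,\l}|_{D_\ell}$, which is available because $2 \le k \le \ell+1$ and $\ell$ is odd. If $a_\ell \not\equiv 0 \bmod \l$ and $f$ admits a companion mod $\ell$, Gross's companion-form theorem gives that $\rho_{f,\l}|_{D_\ell}$ is decomposable, hence tamely ramified, with $\rho_{f,\l}|_{I_\ell}$ conjugate to the diagonal matrix with entries $\theta^{k-1}$ and $1$, $\theta$ being the level-$1$ fundamental character (order $\ell-1$); so $\pi_{f,\l}(I_\ell)$ is a cyclic subgroup of order $d = (\ell-1)/\gcd(k-1,\ell-1)$ of a split Cartan, and the split-Cartan count above yields $\beta = 1 - \frac1d = 1 - \frac{\gcd(k-1,\ell-1)}{\ell-1}$ and $\alpha = \frac{\ell^m-1}{\ell^m+1}\beta$. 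If $a_\ell \equiv 0 \bmod \l$, then $f$ is supersingular, and the reduction of its crystalline local representation (Deligne, Fontaine--Laffaille, Edixhoven) is irreducible, with $\rho_{f,\l}|_{I_\ell}$ conjugate over $\F_{\ell^2}$ to the diagonal matrix with entries $\psi^{k-1}$ and $\psi^{\ell(k-1)}$, $\psi$ a level-$2$ fundamental character (order $\ell^2-1$); this is again tamely ramified, and $\pi_{f,\l}(I_\ell)$ is a cyclic subgroup of order $d' = (\ell+1)/\gcd(k-1,\ell+1)$ of a non-split Cartan. A non-identity element of a non-split Cartan has its two fixed points defined over $\F_{\ell^2}$ but not over $\F_\ell$, so on $\Pl{\F_\l}$ the group $\pi_{f,\l}(I_\ell)$ acts without fixed points when $m$ is odd — all orbits of length $d'$, so $\# I_\P \backslash Z = (\ell^m+1)/d'$ and $\alpha = \beta = 1-\frac1{d'}$ — and fixes exactly $2$ points, with $(\ell^m-1)/d'$ further orbits of length $d'$, when $m$ is even, so $\beta = 1-\frac1{d'}$ and $\alpha = \frac{\ell^m-1}{\ell^m+1}\beta$. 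Assembling $d_K = \ell^{\alpha_\ell}\prod_{p\mid N'}p^{\alpha_p}$ (and likewise $d_L$) and grouping the primes dividing $N'/M$, those dividing $M$, and $\ell$ produces precisely the stated formulas.

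The main obstacle is step (ii) at the prime $\ell$: one has to quote correctly, and check the hypotheses of, the local theorems describing $\rho_{f,\l}|_{D_\ell}$ — Gross's companion-form theorem in the ordinary case, and the description of reductions of crystalline representations (Fontaine--Laffaille, Edixhoven, and Serre's weight recipe) in the supersingular case — in particular that the relevant fundamental character occurs to the exact exponent $k-1$, and that these local representations really are tamely ramified for every weight $2 \le k \le \ell+1$ with $\ell$ odd. The remainder is routine: the orbit-counting for unipotent and split/non-split Cartan elements of $\PGL_2(\F_\l)$ on $\Pl{\F_\l}$, together with the divisibilities $d \mid \ell^m-1$, $r_p \mid \ell^m-1$, $d' \mid \ell^m+1$ for $m$ odd, and $d' \mid \ell^m-1$ for $m$ even, which are what make the orbit lengths come out as claimed.
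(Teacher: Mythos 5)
Your proof is correct and follows essentially the same route as the paper's: reduce to Lemma~\ref{lem:primes}, then at each ramified prime identify the cyclic image of inertia in $\PGL_2(\F_\l)$ (unipotent for $p\mid N'/M$, split Cartan for $p\mid M$, split Cartan generated by a power of the cyclotomic character for $\ell$ ordinary-with-companion via Gross, non-split Cartan via level-2 fundamental characters for $\ell$ supersingular), count the orbits on $\Pl{\F_\l}$, and assemble. The only cosmetic differences are that you phrase the local analysis at $p\mid N'$ in terms of Steinberg/ramified-principal-series types where the paper works directly with the upper-triangular matrix form and cites Diamond to kill the off-diagonal entry, and that you cite Fontaine--Laffaille/Edixhoven where the paper cites Ribet--Stein for the supersingular inertia structure; the mathematical content is the same.
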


\begin{rk}
The value of $N'$ is not difficult to determine in practice. Indeed, a prime $p$ divides $N/N'$ if and only if there exists an eigenform $f'$ of level $N/p$ such that $f' \bmod \l' = f \bmod \l$ for some prime $\l'$ above $\ell$; as a consequence, $N'$ is the product of the primes $p$ such that no such form $f'$ exists.
\end{rk}

\begin{rk}
The case when the ramification at $\ell$ is wild is treated in \cite{MoonTaguchi}.
\end{rk}

\begin{rk}
The image of complex conjugation by $\pi_{f,\l}$ is similar to $\smat{1}{}{}{-1}$ as $\ell$ is odd, so the signature of $K$ is $\left( 2,\frac{\ell^m-1}2 \right)$ and the one of $L$ is $(0, \frac{\# \Im \pi_{f,\l}}2)$. This allows us to determine the sign of the discriminants of $K$ and $L$, should we want to do so.
\end{rk}

\begin{proof}
Let $\rho = \rho_{f,\l} : G_\Q \longrightarrow \GL_2(\F_\l)$ be the linear representation attached to $f \bmod \l$, and write $d_K$ and $d_L$ for the root discriminants of $K$ and $L$.

Let $p \neq \ell$ be a prime dividing $N'$. By hypothesis, $p \parallel N$ and $\rho$ ramifies at $p$, so $\rho(I_p)$ fixes a line point wise by \eqref{local_cond}. We thus have
\[ \rho \vert_{I_p} \sim \mat{1}{\xi}{}{\eps_p}, \]
where $\eps_p = \det \rho \vert_{I_p}$ is the $p$-part of $\eps \bmod \l$. We now distinguish two cases.

On the one hand, if $\eps_p$ is non-trivial, that is to say if $p \mid M$, then by section 2 of \cite{Diamond} we may arrange that $\xi=0$, so $I_p$ acts on $\PP^1(\F_\l)$ via $\eps_p$, whence two orbits formed of a single point (namely $0$ and $\infty$), and $\frac{\ell^m-1}{r_p}$ orbits of size $r_p$.

On the other hand, if $\eps_p$ is trivial, that is to say if $p \mid \frac{N'}M$, then $\xi$ is an additive character on $I_p$ with values in $\F_\l$. As $p \neq \ell$, $\xi$ factors through the tame quotient of $I_p$; since this quotient is cyclic, the image of $\xi$ is either cyclic of order $\ell$ or trivial. But $\xi$ cannot be trivial as $\rho$ ramifies at $p$; therefore, $I_p$ acts on $\PP^1(\F_\l)$ by a cyclic group of translations of order $\ell$, whence one orbit of size $1$ (the point at infinity) and $\ell^{m-1}$ orbits of size $\ell$.

Either way, the ramification is tame at $p$ (which we already knew by the formula \eqref{local_cond} for the Artin conductor), so we can compute the exponent of $p$ in $d_K$ and in $d_L$ thanks to lemma \ref{lem:primes}.

We now focus on the image of the inertia at $\ell$. We distinguish again two cases.

On the one hand, if $f \bmod \l$ is \emph{ordinary}, that is to say if $a_\ell \not \equiv 0 \bmod \l$, then by \cite[proposition 12.1]{Gross}, we have
\[ \rho \vert_{D_\ell} \sim \mat{\chi_\ell^{k-1} \alpha \ }{\xi}{}{\beta}, \]
where $\chi_\ell$ is the mod $\ell$ cyclotomic character (the one that tells the action of Galois on the $\ell$-th roots of unity), and $\alpha$ and $\beta$ are unramified characters. Besides, $\xi = 0$ by \cite{Gross} if $f \bmod \l$ admits a companion form. Therefore,
\[ \rho \vert_{I_\ell} \sim \mat{\chi_\ell^{k-1}}{}{}{1}. \]
Therefore, $I_p$ acts on $\PP^1(\F_\l)$ by multiplication by the $(k-1)$-th powers of $\Flx$, so we have two orbits of size 1 and $\frac{\ell^m-1}{\gcd(k-1,\ell-1)}$ orbits of size $\gcd(k-1,\ell-1)$.

On the other hand, if $f \bmod \l$ is \emph{supersingular}, that is to say if $a_\ell \equiv 0 \bmod \l$, then according to section 2.1.2 of \cite{RibetStein}, we have
\[ \rho \vert_{I_\ell} \sim \mat{\psi^{k-1}}{}{}{\psi'^{k-1}} \]
over $\overline \F_\l$, where $\psi$ and $\psi' = \psi^{\ell}$ are the two fundamental characters of level $2$, which take values in $\F_{\ell^2}^*$.

If $m$ is odd, then this splitting does not occur over $\F_\l$, so $\rho \vert_{I_\ell}$ is similar to $m$ copies of
\begin{equation} \begin{array}{rcccc} \F_{\ell^2}^* & \longrightarrow & \Aut_{\Fl}(\F_{\ell^2}) & \simeq & \GLFl \\ x & \longmapsto & (y \mapsto x y). & & \end{array} \label{psi_nonsplit} \end{equation}
restricted to the $(k-1)$-th powers of $\F_{\ell^2}^*$. Here, we are using the fact that $\F_{\ell^2}^*$ is cyclic and that two matrices with coefficients in $\F_\l$ which are similar over $\overline \F_\l$ are already similar over $\F_\l$. In \eqref{psi_nonsplit}, the $(\ell+1)$-th powers of $\F_{\ell^2}^*$ act as scalars since they lie in $\Flx$, so the action of $I_\ell$ on $\PP^1(\Fl)$ is equivalent to the action of the $(k-1)$-th powers of copies of an $(\ell+1)$-cycle. We thus have $\frac{\ell^m+1}{(\ell+1)/\gcd(k-1,\ell+1)}$ orbits, all of size $\frac{\ell+1}{\gcd(k-1,\ell+1)}$.

If $m$ is even, then we have the decomposition
\[ \rho \vert_{I_\ell} \sim \psi^{k-1} \otimes \mat{1}{}{}{\psi^{(\ell-1)(k-1)}} \]
over $\F_\l$. The character $\psi^{(\ell-1)(k-1)}$ is of order $r=\frac{\ell^2-1}{\gcd\big( (\ell-1)(k-1),\ell^2-1\big)} = \frac{\ell+1}{\gcd(k-1,\ell+1)}$, and the action of $I_\ell$ on $\PP^1(\F_\l)$ yields two orbits of size 1 and $\frac{\ell^m-1}{r}$ orbits of size $r$.

Either way, the ramification is tame, and we can again determine the valuation of $\ell$ in $d_K$ and $d_L$ thanks to by lemma \ref{lem:primes}.
\end{proof}

\begin{rk}One sees easily that same formulas remain valid for $k=1$, in which case one has $\alpha=\beta=0$. Furthermore, the same reasoning can also be used to derive formulas for the discriminant of the fields attached to the \emph{linear} representation $\rho_{f, \l}$ attached to a newform of any weight $k \leqslant \ell+1$ if desired.\end{rk}

\bigskip

Of course, we are mainly interested in the cases when the image of $\pi_{f,\l}$ is $\PGL_2(\Fl)$ or $\PSL_2(\Fl)$. Given an explicit choice of $f \in \newf_k(N,\eps)$ and $\l$, it is easy to ensure this. Indeed, if it were not the case, then according to Dixon's classification of finite subgroups of $\PGL_2(\overline{\Fl})$, either the image of the linear representation $\rho_{f,\l}$ attached to $f \bmod \l$ would be contained in a Borel subgroup or in the normaliser of a split Cartan subgroup in $\GL_2(\Fl)$, or the image of $\pi_{f,\l}$ would be isomorphic to a subgroup of the symmetric group $\mathfrak{S}_4$ or of the alternating group $\mathfrak{A}_5$. In order to rule these cases out, we use the same technique as in section 2 of \cite{SwD}: The Borel (resp. normaliser of Cartan) case can be ruled out by computing the Fourier coefficients $a_p$ of $f$ for a few primes $p$, and finding at least one $p \nmid \ell N$ such that $x^2-a_p x+p^{k-1}$ is irreducible in $\F_\l[x]$ (resp. a few primes $p \nmid \ell N$ which span $(\Z/\ell N \Z)^* \otimes \Z/2\Z$ and such that $a_p \not \equiv 0 \bmod \l$). Similarly, if $\ell \geqslant 7$, the $\mathfrak{S}_4$ or $\mathfrak{A}_5$ case may be ruled out by exhibiting a prime $p \neq \ell$ such that $a_p^2/p^{k-1} \bmod \l$ is not a root of $x(x-1)(x-2)(x-4)(x^2-3x+1)$. One then sees if the image of $\pi_{f,\l}$ is $\PSL_2(\Fl)$ or $\PGL_2(\Fl)$ by checking whether the values of the mod $\ell N$ character $x \mapsto x^{k-1} \eps(x) \bmod \l$ are all squares in $\F_l^*$ or not. 

\begin{ex}
For instance, we can easily check this way that the projective representation attached to \mfref{3.22.1.b} mod $41$ is surjective; since this form is ordinary at 41, the corresponding field $K$ of degree 42 that we are currently computing has root discriminant
\[ d_K = (3^{40} 41^{39})^{1/42} = 89.533 \cdots, \]
whereas its Galois closure has root discriminant
\[ d_L = 3^{40/41} 41^{41/42} = 109.614 \cdots. \]
\end{ex}

\subsection{A few examples with trivial nebentypus}

In the case where $f \bmod \l$ has trivial nebentypus, is ordinary, and admits a companion, we get especially small root discriminants when $N'$ is small and $k-1$ and $\ell-1$ have a large common factor. If they do not, for large $\ell$ the root discriminant of both $K$ and $L$ is asymptotically $\ell N$. This condition should not be confused with the condition from section \ref{sect:XH} that $\gcd(k-2,\ell-1)$ be large for $X_H(\ell N)$ to be much smaller than $X_1(\ell N)$. Sadly, these two conditions are rather contradictory\footnote{Unless $k=2$ of course, since we do not have to raise the level from $N$ to $\ell N$ in this case.}, so unfortunately the most interesting examples are the hardest to compute. For instance, of all the examples of newforms with trivial nebentypus, rational coefficients, and which admit a companion listed by David Roberts in table 3.2 of \cite{Roberts_companion}, the only case for which we are able to compute the associated Galois representation is for \mfref{7.8.1.a} mod 13, which we presented in section \ref{sect:results}.  

\bigskip

When $f \bmod \l$ is supersingular and has trivial nebentypus, the condition for the root discriminant to drop  from the $\ell N$ asymptotic is that $k-1$ and $\ell+1$ have a large common factor, and this seems more compatible with the condition for $X_H(\ell N)$ to have moderate genus. However, for $11 \leqslant \ell \leqslant 41$ we have found no examples of forms of weight $2 \leqslant k \leqslant \ell+1$, squarefree level $N \leqslant 20$ corpime to $\ell$ and trivial nebentypus which are supersingular mod~ a prime above $\ell$, whose mod $\ell$ projective representation has big image, and such that $\gcd(k-1,\ell+1)>1$. This is partly because having a trivial nebentypus forces $k-1$ to be odd, which makes it harder for its gcd with the even number $\ell+1$ to be nontrivial. We have still found two examples of surjective representations attached to supersingular forms leading to small root discriminants, even though $\gcd(k-1,\ell+1)=1$ in each case:

\begin{itemize}

\item The mod $\ell=19$ projective representation attached to
\[ \mfref{3.10.1.b} = q+18q^2+81q^3+O(q^4) \in \newf_{10}\big(\Gamma_0(3)\big) \]
is surjective and this form is supersingular mod $19$, whence an example of $\PGL_2(\F_{19})$-field with
\[ d_K = (3^{18} 19^{19})^{1/20} = 44.078 \cdots \]
and
\[ d_L = 3^{18/19} 19^{19/20} = 46.432 \cdots. \]
This projective representation also comes from the newform
\[ \mfref{3.12.1.a} = q+78q^2-243q^3+O(q^4) \in \newf_{12}\big(\Gamma_0(3)\big). \]

\item The mod $\ell=29$ projective representation attached to
\[ \mfref{2.14.1.a} = q-64q^2-1836q^3+O(q^4) \in \newf_{14}\big(\Gamma_0(2)\big) \]
is surjective and this form is supersingular mod $29$, whence an example of $\PGL_2(\F_{29})$-field with
\[ d_K = (2^{28} 29^{29})^{1/30} = 49.500 \cdots \]
and
\[ d_L = 2^{28/29} 29^{29/30} = 50.617 \cdots. \]
This projective representation also comes from the newform
\[ \mfref{2.18.1.a} = q+256q^2+6084q^3+O(q^4) \in \newf_{18}\big(\Gamma_0(2)\big). \]

\end{itemize}

The corresponding degree $\ell+1$ fields are not part of \cite{KlunersMalle} nor of the \cite{LMFDB}; according to \cite[p.14]{Roberts_companion}, it is possible that these fields are the ones of smallest root discriminant with this Galois group, and similarly for their Galois closures. Unfortunately, the genus of the modular curve $X_H(\ell N)$ is respectively $g=43$ and $g=36$ in these examples (no matter which of the two possible newforms we look at), which keeps them out of computational reach at present.

\subsection{Supersingular forms with nontrivial nebentypus}

As supersingularity is very easy to test, we have run a search form newforms of squarefree, coprime to $\ell$ level at most $20$ for $7 \leqslant \ell \leqslant 13$ and at most $10$ for $17 \leqslant \ell \leqslant 41$ which have (possibly odd) weight\footnote{According to theorem 2.8 of \cite{Edi92}, this is equivalent to searching for weights between $2$ and $\ell+1$.} $2 \leqslant k \leqslant \ell$, are supersingular modulo at least one prime $\l$ (of any degree) above $\ell$, and have nontrivial nebentypus, and we have kept the cases for which $d_K \leqslant 8 \pi e^{\gamma}$ or $d_L \leqslant 70$. Here, as before $d_K$ is the root discriminant of the field corresponding to the stabiliser of a point of $\PP^1(\F_\l)$ and $d_L$ is the root discriminant of its Galois closure.

In most cases, imposing these tight bounds on the root discriminants leads to representations with small image\footnote{A similar phenomenon is reported in section 4.5 of \cite{Roberts_companion}.}, the most frequent case being that the image is contained in the normaliser of a Cartan subgroup. After eliminating these cases, we are left with only two projective representations whose image is either $\PGL_2(\F_\l)$ or $\PSL_2(\F_\l)$:

\rowcolors{2}{gray!25}{white}
\[
\begin{array}{|c|c|c|c|c|}
\hline
\rowcolor{gray!50}
\text{Galois group} & d_K & d_L & \text{Newforms} & \text{Genus} \\
\hline
\PGL_2(\F_7) & 27.269 \cdots & 46.531 \cdots & \mfref{13.2.2.a}, \ \mfref{13.8.4.a} & 2 \\
\PSL_2(\F_{37}) & 51.483 \cdots & 52.993 \cdots & \mfref{3.13.2.b}, \ \mfref{3.27.2.a} & 385 \\
\hline
\end{array}
\]
\rowcolors{2}{white}{white}

\bigskip

In this table, each line corresponds to a projective representation, and we indicate which supersingular forms found in our search yield this representation. To each form, we associate as in section \ref{sect:XH} a modular curve $X_H(\ell N)$ whose jacobian contains the corresponding mod $\l$ linear representation, and we indicate the smallest of the genera of these curves\footnote{It is of course not impossible that the projective representation is afforded in a curve of smaller genus, but we do not know how to construct such a curve, nor if it exists.}.
Sadly, we have not found any example with a prime of degree higher than 1 satisfying the root discriminant bounds.

\bigskip

The first representation is the one attached to the newform
\[ \mfref{13.2.2.a} = q + (\zeta_3-1) q^2 - (2 \zeta_3-2) q^3 + O(q^4) \in \newf_2(13,\eps) \]
modulo any\footnote{Changing the prime is tantamount ot twisting the linear representation in this case.} of the two primes above 7 in $\Q(\zeta_3)$, where $\zeta_3$ is a primitive cube root of 1 and $\eps$ sends $2 \bmod 13$ to $-\zeta_3$. As the weight of this form is $2$, this representation occurs in the $13$-torsion of the genus 2 curve \href{http://www.lmfdb.org/Genus2Curve/Q/169/a/169/1}{$X_1(13)$}, so it is extremely easy to compute it with our algorithms. In fact, the corresponding degree 8 field is already part of \cite{KlunersMalle}.

On the other hand, as far as we know the second example is completely out of computational reach, which is a real pity as it is the first $\PSL_2$ example that we have encountered, and the value of $d_L$ is quite small.

\newpage

\end{document}